\begin{document}

\title{Necessary and sufficient condition for the existence of a Fréchet mean on the circle}

\author{Benjamin Charlier\\
Institut de Math\'ematiques de Toulouse\\
Universit\'e de Toulouse et CNRS (UMR 5219) \\
{\small {\tt Benjamin.Charlier@math.univ-toulouse.fr} }}

\maketitle

\begin{abstract}
Let $(\S^1,d_{\S^1})$ be the unit circle in $\R^2$ endowed with the arclength distance. We give a sufficient and necessary condition for a general probability measure $\mu$ to admit a well defined Fréchet mean on $(\S^1,d_{\S^1})$. %This criterion allows to recover already known sufficient conditions of existence. 
We derive a new sufficient condition of existence $P(\alpha,\varphi)$ with no restriction on the support of the measure. Then, we study the convergence of the empirical Fréchet mean to the Fréchet mean and we give an algorithm to compute it.
\end{abstract}
\begin{center}
\textit{Keywords} : circular data, Fréchet mean, uniqueness. \textit{AMS classification}: 62H11.
\end{center}

\section{Introduction}

\subsection{Statistics for non-Euclidean data.} In many fields of interest, results of an experiment are objects taking values in non-Euclidean spaces.  A rather general framework to model such data is Riemannian geometry and more particularly quotient manifolds. As an illustration, in biology or geology, directional data are often used, see e.g. \cite{MR1828667} or  \cite{MR1251957} and references therein. In this case, observations take their values in the circle or a sphere, that is, an Euclidean space quotiented by the action of scaling. %Another well known example of non-Euclidean data in the statistical literature is Kendall's Shape Space, see e.g. \cite{MR1891210} or \cite{MR1646114} and references therein. The shape of a  $k$-ads of the plane is invariant by the action of the group of similarity in the plane. After a convenient renormalization, the shape space is identified to a complex projective space, that is the quotient of a sphere by the action of a group of rotation.  

The usual definitions of basic statistical concepts were developed in an Euclidean framework. Therefore, these definitions must be adapted for random variables with values in non-Euclidean spaces such as manifolds. To describe the localization of a probability distribution, one needs to define a central value such as a mean or a median. %In an Euclidean space, problems of existence and uniqueness may arise when one tries to defined such a central value. In general metric spaces, the situation is further complicated by extra phenomenons that do not happen in the Euclidean spaces. Therefore, 
There has been multiple attempts to give a  definition of a mean in non Euclidean space, see among many others \cite{Bhatt,MR1705458,MR1842295,MR2085875,MR0442975,MR1187782,MR1370296} or \cite{MR0027464}.

In this paper, we consider the so-called Fréchet mean, see \cite{MR0027464,MR0442975,MR1842295} or \cite{Bhatt} and references therein. We are particularly interested in the study of its uniqueness. The Fréchet mean is defined on general metric spaces by extending the fact that Euclidean mean minimizes the sum of square the distance to the data, see equation \eqref{eq.Fre} below. To study the well definiteness of the Fréchet mean on a manifold, two facts must be taken into account: non uniqueness of geodesics from one point to another  (existence of a cut locus) and the effect of curvature, see e.g. \cite{MR1705458} for further discussion. Due to the cut-locus, the distance function is no longer convex and finding conditions to ensure the uniqueness of the Fréchet mean is not obvious. Two main directions have been explored  in the literature: bounding the support of the measure  in \cite{Buss:2001:SAA:502122.502124} for the $n$-spheres and in \cite{MR0442975,MR1891212,MR1842295,MR2736346} for manifolds,  or consider special cases of  absolutely continuous radial distributions, see  \cite{Kaziska20081314} for the unit circle and  or \cite{MR1891212,MR1618880} for projective spaces. In a sense, these two conditions control the concentration of the probability measure. The philosophy behind these works is  to ensure a convexity property of the Fréchet functional given by equation \eqref{eq.F} below, see e.g. the introduction of \cite{MR2736346} for a review of the above cited papers.

\subsection{Fréchet mean on the circle}

A standard way to extend the definition of the Euclidean mean in non-Euclidean metric space is to use the minimization property of the Euclidean mean. This definition is usually credited to M. Fréchet in \cite{MR0027464} although some authors credit it to E. Cartan, see e.g. \cite{MR1891212}. Let $\S^1$ be the unit circle  of the plane,
$$
\S^1 = \{x_1^2+x_2^2 =1,\ (x_1,x_2)\in\R^2 \}.
$$
endowed with the arclength distance given for all $x=(x_1,x_2),p=(p_1,p_2)\in\S^1$ by
\begin{equation}\label{eq:dist0}
d_{\S^1}(x,p) = 2 \arcsin\left(\frac{\norm{x- p}}{2}\right),
\end{equation}
where $\norm{x - p} = \sqrt{(x_1-p_1)^2 + (x_2 - p_2 )^2}$ is the Euclidean norm in $\R^2$. In the whole paper, $\mu$ is a probability measure on $\S^1$ and the Fréchet functional is defined for all $p\in \S^1$ by 
\begin{equation}\label{eq.F}
F_\mu (p) = \frac{1}{2} \int_{\S^1} d^2(x,p) d\mu(x).
\end{equation}
The Fréchet functional is Lipschitz since by the triangle inequality we have %$|d_{\S^1}^2(p_1,x)-d_{\S^1}^2(x,p_2)| \leq 2\pi d_{\S^1}(p_1,p_2)$ for any $p_1,p_2,x\in\S^1$ which yields
$
|F_\mu(p_1)-F_\mu(p_2)| \leq 2\pi d_{\S^1}(p_1,p_2)
$ for any $p_1,p_2\in\S^1$. Thus,  $F_{\mu}$  attains its minimum in at least one point and the only issue at hand is uniqueness.
\begin{definition}
	We say that the Fréchet mean of a probability measure $\mu$ in $(\S^1,d_{\S^1})$ is well defined if $F_\mu$ admits a \emph{unique} argmin. That is, there exists a unique $p^*\in \S^1$ satisfying $F_\mu (p^*) = \min_{p\in M} F_{\mu}(p)$, and we note 
\begin{equation}\label{eq.Fre}
p^* = \argmin_{p\in \S^1} F_{\mu}(p).
\end{equation} 
\end{definition}
The argmins of $F_\mu$  are also called Riemannian center of mass \cite{MR1370296} or intrinsic mean \cite{Bhatt} % In this paper, we restrict our attention to the case $(M,d) = (\S^1,d_{\S^1})$ which is %of the circle $\S^1$ endowed with the arclength distance $d_{\S^1}$ defined in \eqref{eq:dist0}. The circle $(\S^1,d_{\S^1})$ is 
 % of finite diameter $ \max\{ d_{\S^1}(x,p), \ x,p \in\S^1 \} = \pi$. 
%In this paper, we focus on the so-called Fréchet mean of a probability measure $\mu$ on the unit circle $\S^1$ of the plane,
as  the  $(\S^1,d_{\S^1})$ is a simple one dimensional compact Riemannian manifold. The advantage of dealing with a simple object such as the circle is that curvature problems disappear and we only face the cut-locus problem. In this sense, it allows us to completely understand its effect on the non-convexity of the distance function $d_{\S^1}$, and to give a complete answer about the problem of uniqueness. In what follows, we fully characterize probability measures that admit a well defined Fréchet mean on the circle $(\S^1,d_{\S^1})$. In particularly  a necessary and sufficient condition is given in Theorem \ref{th:crit}, which  links  the existence of a Fréchet mean for a measure $\mu$ to the comparison between the distribution $\mu$ and the uniform measure $\lambda$ on $\S^1$. %Obviously,
The surprising fact is that $\lambda$ appears as a benchmark to discriminate measures  having a well defined Fréchet mean.  The uniform measure $\lambda$ is the 'worst' possible case  as all points of the circle is a Fréchet mean, indeed the Fréchet functional \eqref{eq.F} is constant and equals to $\frac{\pi^3}{3}$.

In opposition to what have been done before we do not try to ensure convexity property on the Fréchet functional. Indeed,  the definition of the Fréchet mean relies on the \emph{global} optimization problem \eqref{eq.Fre}  which is, in general, non convex. The advantage of our approach is that we do not need to restrict the support or suppose restrictive conditions of symmetry on the density. As the geometry of flat manifold is simple, we can derive explicit form on the Fréchet functional and its derivative which can be hard to compute  in non-flat manifolds such as $n$-dimensional spheres. 

\subsection{Organization of the paper}

%In Section \ref{part.FM}, we recall the definition of the Fréchet mean, and we review already known sufficient conditions that ensure the well definiteness of the Fréchet mean. 
In Section \ref{part.notation},  we introduce notations that will be used throughout the paper. In Section \ref{part.ff},  we give explicit expressions for the Fréchet functional and its derivative and we discuss some properties of critical points of the Fréchet functional. Section \ref{part.th} contains the main result with the necessary and sufficient condition of Theorem \ref{th:crit} for the existence of the Fréchet mean for a general measure. We also %recover the conditions introduced in Section \ref{part.FM}, and we 
propose a new sufficient criterion $P(\alpha,\varphi)$ that ensures the well definiteness of the Fréchet mean. In Section \ref{part.emp}, we study the convergence of the empirical Fréchet mean to the Fréchet mean, and describe an algorithm to compute the empirical Fréchet mean.

\section{Notations}
 \label{part.notation}

%We detail here the notations that will be used in the rest of this note. % the case of the sphere  $\S^1$ unit circle in the plane. It is a flat manifold of dimension 1. 
In what follows,  $\1_{A}$ denotes the indicator function of the set $A\subset \R$  and the notation $\int_{a}^{b}f(t)d\mu_{p_0}(t) $ stands for the Lebesgue integral $\int_{[a,b[}f(t)d\mu_{p_0}(t)$ if $a\leq b$ and $\int_{[b,a[}f(t)d\mu_{p_0}(t) $ if $b>a $.

\subsection{Normal coordinates}

Given a base point $p\in\S^1$, there is a canonical chart called  the exponential map defined from $T_p\S^1 \simeq \R$, the tangent space of $\S^1$ at $p$, to $\S^1$ and denoted by 
\begin{equation*}
	\begin{array}{rcl}
		e_p : \R & \longrightarrow &\S^1 \\
		\theta & \longmapsto & e_p(\theta) = R_{\theta} p,\end{array} \qquad \text{ where } R_{\theta} = \begin{pmatrix}
		\cos(\theta) & -\sin(\theta) \\\sin(\theta) & \cos(\theta)\\ 
	\end{pmatrix} 
\end{equation*}
%Equivalently, we could define $e_p :\R \longrightarrow  \R \slash (2\pi\Z)$ by $e_p(\theta) = [\theta]$ and the exponential map corresponds in this case to the quotient map $\R \longrightarrow \R\slash(2\pi\Z)$. %The terminology comes from Lie Group theory as $\R$ can be viewed as the tangent space attached to the point $p$. 
This map is onto but not one to one as it is $2\pi$-periodic. To guaranty the injectivity, we choose to restrict the domain of definition $\R$ of $e_p$ to $[-\pi,\pi[$. Thus, for all $p_1,p_2\in\S^1$ there is a now unique $\theta_{p_2}^{p_1} \in[-\pi,\pi[$ satisfying $e_{p_1}(\theta_{p_2}^{p_1}) = p_2$ and 
$$
e_p :[-\pi,\pi[\longrightarrow \S^1 \qquad \text{ and } \qquad    e_p^{-1} :  \S^1 \longrightarrow [-\pi,\pi[, \qquad \text{ for all }p\in\S^1.
$$
Such parametrizations are called normal coordinates systems centered at $p$ and $\theta_{p_2}^{p_1}$ is nothing else but the coordinate of $p_2$ read in  a normal  coordinate system centered in $p_1$. To simplify the notations,  we will omit the exponent $p_1$ if no confusion is possible and we will write $\theta_{p_2}^{p_1} = \theta_{p_2}$.  

%Another equivalent point of view is to identify the one dimensional sphere $\S^1$ with the torus $\R \slash (2\pi\Z)$, that is the real line $\R$ quotiented by the equivalent relation $\sim$ defined for all $\theta_1, \theta_2 \in \R$ by 
%$
%\theta_1\sim \theta_2 \text{ if and only if } \theta_1 - \theta_2 = 2\pi k, \ k\in\Z$. We denote $[\theta]$ the unique representative in $[-\pi,\pi[$ of the equivalence class  of  $\theta\in\R$. Thence, a change of base point of the normal coordinate system corresponds to translation in $\R \slash (2\pi\Z)$ and or any $p,p_1,p_2 \in \S^1 \simeq  \R \slash (2\pi\Z)$ we have 
%$
%e_{p_2}^{-1}\circ e_{p_1} (\theta_p^{p_1}) = [\theta _p^{p_1}- \theta_{p_1}^{p_2}]=\theta_p^{p_2},
%$
%where $e_{p_2}^{-1}\circ e_{p_1} : [-\pi,\pi[  \longrightarrow [-\pi,\pi[$. 

The cut locus of a point $p_0\in\S^1$ is denoted by $\tilde{p_0}$ and is equal to the opposite point (in $\R^2$) of $p_0$, that is $\tilde p_0 = -p_0$. In a normal coordinates system centered at $p\in\S^1$, the coordinate of  $\tilde p_0$ is $\theta_{\tilde p_0}^p = \theta_{p_0}^p - \pi$ if $0\leq \theta_{p_0}^p<\pi$ or $ \theta_{\tilde p_0}^p = \theta_{p_0}^p + \pi$ if $\pi\leq \theta_{p_0}^p<0$.

\subsection{Distance function and probability measures and the Fr\'echet functional}

% By choosing an arbitrary $p_0\in\S^1$, we can identify any $p\in\S^1$ with $\theta_p^{p_0}$, the angle between $p_0$ and $p$. 
The arclength distance between two points $p_1,p_2\in\S^1$ was defined in \eqref{eq:dist0}. Given normal coordinates $\theta_{p_1},\theta_{p_2}\in [-\pi,\pi[$ of these points in the chart centered at an arbitrary point , 
$$
d_{\S_1}(p_1,p_2) = d_{\R\slash 2\pi\Z}(\theta_{p_1},\theta_{p_2}) := \min\{\abs{\theta_{p_1}-\theta_{p_2}+2\pi k} , k\in\Z \}.
$$
This means that the circle $\S^1$ is locally isometric to the real line $\R$. % and we use the same notation for the spherical distance between points in $\S^1\subset\R^2$ and points in $\R\slash(2\pi\Z)$.% We have also the following equality $d_{\S_1}(\theta_1,\theta_2) = d(\theta_1, [\theta_2] ) = d(\theta_2, [\theta_1] )$ where $d(x,A) = \inf\{ \abs{x-a}, \ a\in A \}$ is the usual distance between a point $x$ and a set $A$ in $\R$.  %For convenience, we will choose an arbitrary 

%\subsection{Probability measure on the circle}

Unless specified, $\mu$ will denote a general probability measure on $(\S^1,\mathcal B(\S^1))$ where $\mathcal B(\S^1)$ is the Borel set of $\S^1\subset \R^2$. Given a point $p_0\in \S^1$, $\mu_{p_0}$ is the image measure of $\mu$ through  $e_{p_0}^{-1}:\S^1 \longrightarrow [-\pi,\pi[$. This is a measure on $\R$ with a support in $[-\pi,\pi[$ which is defined by 
\begin{equation}\label{eq.mes}
	\mu_{p_0}(A) = \mu \circ e_{p_0}(A \cap [-\pi,\pi[), \qquad \text{ for all } A \in\mathcal B(\R),
\end{equation}
where $\mathcal B(\R)$ is the Borel set in $\R$. The usual Euclidean mean/expectation and variance of $\mu_{p_0}$ are denoted 
$$
m(\mu_{p_0}) = \int_{\R} t d\mu_{p_0}(t). %\qquad \text{ and }\qquad \Var(\mu_{p_0})= \int_{\R} t^2 d\mu_{p_0}(\theta) - (m(\mu_{p_0}))^2
$$
Finally, let us introduce for any $p_0\in \S^1$, the map $F_{\mu_{p_0}}:[-\pi,\pi[\longrightarrow\R$ given by%The Fréchet functional on $(\R,|\cdot|)$ endowed with a probability measure $\mu_p$ is

\begin{align*}
	F_{\mu_{p_0}}(\theta) := F_\mu(e_{p_0}(\theta)) &= \frac{1}{2}\int_{-\pi}^\pi d_{\R\slash 2\pi\Z}^2(t,\theta) d\mu_{p_0}(t) \\ &= \frac{1}{2} \begin{cases}  {\int_{-\pi}^{\theta-\pi} (t +2\pi - \theta)^2d\mu_{p_0}(t) +  \int_{\theta-\pi}^{\pi} ( t-\theta )^2d\mu_{p_0}(t)},&%\\  \hfill  
		\text{ if } 0\leq \theta< \pi, \\ 
		{\int_{-\pi}^{\theta+\pi} (t  -\theta)^2d\mu_{p_0}(t) +  \int_{\theta+\pi}^{\pi} ( t- 2\pi -\theta)^2d\mu_{p_0}(t)},  & %\\  \hfill  
		\text{ if } -\pi\leq \theta<0.
	\end{cases} 
	% \qquad \theta\in [-\pi,\pi[.
	\end{align*}

%for all $\theta\in\R$. It attains its unique minimum at $m(\mu_{p})$ and its value at this minimum is $\Var(\mu_{p})$. Note that we have for all $p_0,p\in\S^1$
%$$
%F_{\mu}(p) = F_{\mu} \circ e_{p_0}( \theta^{p_0}_p) = F_{\mu_{p_0}}(\theta^{p_0}_p).
%$$
%Finally, following \cite{MR1187782}, a point $p\in\S^1$ satisfying $m(\mu_p) =0$ is called an exponential barycenter. 

\section{The Fréchet functional on the Circle} \label{part.ff}

\subsection{The derivative of the Fréchet functional}

A  function $f:[-\pi,\pi[\longrightarrow\R$ is said left continuous on $[-\pi,\pi[$ if it is left continuous everywhere on $]-\pi,\pi[$ and with $\lim_{\varepsilon\to0^-} f( \pi +\varepsilon) = f(-\pi)$. Similarly, $f$ is said to be continuous on $[-\pi,\pi[$ if it is left and right continuous on $[-\pi,\pi[$. We provide here an explicit expression of the derivative of $F_\mu$,

\begin{proposition}\label{prop,grad}
Let $\mu$ be a probability measure on $(\S^1,d_{\S^1})$ and fix an arbitrary $p_0\in\S^1$. Then, $F_{\mu}:\S^1 \longrightarrow\R$ is differentiable in following sense :
\begin{enumerate}
	\item Let $p\in\S^1$ be a point with a cut locus of $\mu$-measure 0, i.e $\mu(\{-p\}) =0$. Then $F_{\mu_{p_0}}$ is continuously differentiable at $\theta_p^{p_0}$ and we have
\begin{equation}\label{eq.grad}
%\frac{d}{dp}F_{\mu}(p) :=
\frac{d}{d\theta}F_{\mu_{p_0}}(\theta_p^{p_0}) = \begin{cases} {\displaystyle\theta_p^{p_0}  - 2\pi\mu_{p_0}([-\pi,-\pi + \theta_p^{p_0}[ ) - m( \mu_{p_0})}, &% \\ 
\hfill\text{ if } 0\leq \theta_p^{p_0}<\pi, \\  {\displaystyle \theta_p^{p_0}  + 2\pi\mu_{p_0}([\pi+ \theta_p^{p_0},\pi [)  - m( \mu_{p_0})}, &% \\ \hfill
\text{ if } -\pi\leq \theta_p^{p_0}<0 .
\end{cases}
\end{equation}
\item The function $\frac{d}{d\theta}F_{\mu_{p_0}}$ is left continuous on $[-\pi,\pi[$. Then we extend the definition of the derivative of $F_\mu$ by  setting \emph{for all} $\theta\in]- \pi,\pi [$
\begin{equation}\label{eq.graddef}
\frac{d}{d\theta}F_{\mu_{p_0}}(\theta) := \lim_{\varepsilon \to 0^-} \frac{d}{d\theta}F_{\mu_{p_0}}(\theta +\varepsilon),
\end{equation}
and $\frac{d}{d\theta}F_{\mu_{p_0}}(-\pi) = \lim_{\varepsilon \to 0^-} \frac{d}{d\theta}F_{\mu_{p_0}}(\pi +\varepsilon)$.
\item Let $p\in\S^1$ be a point with a cut locus of positive measure, i.e $\mu(\{-p\}) >0$. Then, $p$ is a cusp point of $F_\mu$ in the sense that $ \lim_{\varepsilon \to 0^-} \frac{d}{d\theta}F_{\mu_{p_0}}(\theta_p^{p_0} +\varepsilon) -  \lim_{\varepsilon \to 0^+} \frac{d}{d\theta}F_{\mu_{p_0}}(\theta_p^{p_0} +\varepsilon) = -\mu(\{ -p\})$. \label{stat.3}
\end{enumerate}
\end{proposition}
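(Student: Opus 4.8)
The plan is to reduce everything to a one-dimensional computation in a fixed normal chart and to isolate the single source of non-smoothness of $F_{\mu_{p_0}}$. Write $G(x):=\mu_{p_0}([-\pi,x[)$ for the (left-continuous) distribution function of $\mu_{p_0}$ and $m:=m(\mu_{p_0})$. For $\theta\in[0,\pi[$ set $s:=\theta-\pi$, which is exactly the normal coordinate of the cut locus of $p=e_{p_0}(\theta)$; the identity $(t+2\pi-\theta)^2=(t-\theta)^2+4\pi(t-s)$ turns the explicit expression of $F_{\mu_{p_0}}$ recalled above into
\begin{equation*}
2F_{\mu_{p_0}}(\theta)=\int_{-\pi}^{\pi}(t-\theta)^2\,d\mu_{p_0}(t)+4\pi\,H(\theta-\pi),\qquad H(s):=\int_{[-\pi,s[}(t-s)\,d\mu_{p_0}(t),
\end{equation*}
and, symmetrically, $2F_{\mu_{p_0}}(\theta)=\int_{-\pi}^{\pi}(t-\theta)^2\,d\mu_{p_0}(t)-4\pi\widetilde H(\theta+\pi)$ with $\widetilde H(u):=\int_{[u,\pi[}(t-u)\,d\mu_{p_0}(t)$ on $[-\pi,0[$. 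The first summand is the smooth quadratic $(\text{2nd moment})-2m\theta+\theta^2$, so every non-differentiability of $F_{\mu_{p_0}}$ lives in the term $4\pi H(\theta-\pi)$ (resp. $-4\pi\widetilde H(\theta+\pi)$).

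The technical core is the one-sided derivatives of $H$ and $\widetilde H$. Because the integrand $t-s$ vanishes at the moving endpoint $t=s$, the function $H$ is continuous; decomposing the difference quotient $\big(H(s+h)-H(s)\big)/h$ into an ``integrand has moved'' piece (which contributes $-\mu_{p_0}([-\pi,s[)=-G(s)$ in the limit) and a ``domain has moved'' piece, and using $|t-s-h|\le|h|$ on the thin strip between $s$ and $s+h$, I would obtain
\begin{equation*}
H'_-(s)=-\mu_{p_0}([-\pi,s[)=-G(s),\qquad H'_+(s)=-\mu_{p_0}([-\pi,s])=-G(s)-\mu_{p_0}(\{s\}),
\end{equation*}
for every $s$, and likewise $\widetilde H'_-(u)=-\mu_{p_0}([u,\pi[)$, $\widetilde H'_+(u)=-\mu_{p_0}(]u,\pi[)$. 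Part (1) follows at once: if $\mu(\{-p\})=\mu_{p_0}(\{\theta_p-\pi\})=0$ (case $\theta_p\in[0,\pi[$) then $H$ is two-sidedly differentiable at $\theta_p-\pi$, so $F_{\mu_{p_0}}$ is differentiable at $\theta_p$ with $\tfrac{d}{d\theta}F_{\mu_{p_0}}(\theta_p)=\theta_p-m-2\pi G(\theta_p-\pi)$, which is \eqref{eq.grad}; the case $\theta_p\in[-\pi,0[$ is identical via $\widetilde H$ and $\mu_{p_0}([u,\pi[)=1-G(u)$. Moreover $G$ is continuous at $\theta_p-\pi$ (as $\mu_{p_0}(\{\theta_p-\pi\})=0$), so the derivative function is continuous at $\theta_p$.

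For part (2): on $[0,\pi[$ the map \eqref{eq.grad} is $\theta\mapsto\theta-m-2\pi G(\theta-\pi)$ and on $[-\pi,0[$ it is $\theta\mapsto\theta-m+2\pi\big(1-G(\theta+\pi)\big)$, each left-continuous because $G$ is; they match at $\theta=0$ (left-limit $-m$ on the left branch, value $-m$ on the right branch, using $G(-\pi)=0$, $\lim_{x\to\pi^-}G(x)=1$), and after rewriting $\mu_{p_0}([0,\pi[)=1-G(0)$ with $\mu_{p_0}([-\pi,\pi[)=1$ the ``wrap-around'' equality at $\pm\pi$ becomes the tautology $\pi-m-2\pi G(0)=-\pi-m+2\pi(1-G(0))$. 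Since $G$ has only countably many jumps and one-sided limits everywhere, $\lim_{\varepsilon\to0^-}\tfrac{d}{d\theta}F_{\mu_{p_0}}(\theta+\varepsilon)$ exists and equals this left-continuous function, so \eqref{eq.graddef} is well defined, agrees with \eqref{eq.grad}, and is left-continuous on $[-\pi,\pi[$. For part (3): if $\mu(\{-p\})=\mu_{p_0}(\{\theta_p-\pi\})>0$ then $G$ jumps by $\mu_{p_0}(\{\theta_p-\pi\})$ at $\theta_p-\pi$, so by the formulas for $H'_\pm$ the one-sided limits of $\tfrac{d}{d\theta}F_{\mu_{p_0}}$ at $\theta_p$ differ by $2\pi$ times that jump, $\lim_{\varepsilon\to0^-}\tfrac{d}{d\theta}F_{\mu_{p_0}}(\theta_p+\varepsilon)-\lim_{\varepsilon\to0^+}\tfrac{d}{d\theta}F_{\mu_{p_0}}(\theta_p+\varepsilon)=2\pi\,\mu(\{-p\})\neq0$ (the quadratic part being $C^1$ contributes nothing to the jump); hence $\theta_p$ is a genuine corner of $F_{\mu_{p_0}}$, the asserted cusp. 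The boundary cases $\theta_p=0$ and $\theta_p=-\pi$ are handled the same way by passing through the appropriate branch, and the Dirac mass $\delta_q$, whose functional $\tfrac12 d_{\S^1}^2(q,\cdot)$ has a corner of opening $2\pi$ at $-q$, confirms the constant.

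The only genuinely delicate step is the one-sided difference-quotient analysis of $H$ (and $\widetilde H$) near an atom: one must verify that the ``domain has moved'' contribution is exactly $-\mu_{p_0}(\{s\})$ on the right and $0$ on the left — in particular finite and with the correct sign — since this is precisely what makes \eqref{eq.grad} the \emph{left} derivative and produces the jump in (3). Everything else is routine, the only further ingredient being the normalisation $\mu_{p_0}([-\pi,\pi[)=1$, used for the matching of the two coordinate branches at $\theta=0$ and at $\pm\pi$.
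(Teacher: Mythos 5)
Your proof is correct and follows essentially the same route as the paper: you isolate the smooth quadratic part of $F_{\mu_{p_0}}$ and reduce everything to one-sided difference quotients of a wrap-around correction term (your $H$, $\widetilde H$ are, up to the shift $s=\theta\mp\pi$, exactly the paper's $g^+_{\mu_{p_0}}$, $g^-_{\mu_{p_0}}$ in the decomposition \eqref{eq.Ff}), and your treatment of the moving endpoint near an atom is the same estimate the paper uses. The one substantive divergence is Part 3: you obtain $\lim_{\varepsilon\to0^-}\frac{d}{d\theta}F_{\mu_{p_0}}(\theta_p^{p_0}+\varepsilon)-\lim_{\varepsilon\to0^+}\frac{d}{d\theta}F_{\mu_{p_0}}(\theta_p^{p_0}+\varepsilon)=+2\pi\,\mu(\{-p\})$ rather than the stated $-\mu(\{-p\})$; your value is the right one (the Dirac-mass check settles both the sign and the factor $2\pi$, and it is precisely the positivity of this left-minus-right difference that Corollary \ref{prop.min} needs), whereas the proposition and the paper's own computation drop the factor $2\pi$ inherited from \eqref{eq.Ff} and carry a sign slip. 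So this is a typo in the statement that you have silently corrected, not a gap in your argument.
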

\noindent Note that the left-continuity comes from our convention on the exponential map which is defined on $[-\pi,\pi[$. If a measure $\mu$ is such that  $\mu(\{ p \})=0$ for all $p\in\S^1$ then $F_\mu$ is of class $\mathcal C^1$ on $[-\pi,\pi[$. Differentiability issues appear when the measure $\mu$ has atoms, see Figure \ref{fig.1}. % where $F_{\mu_{p^*}}$ has three cusp points at $-\pi$, $-\tfrac{\pi}{3}$ and $\tfrac{\pi}{3} $ corresponding to cut loci of the three Dirac masses.%At the cut locus of an atom, $F_\mu$ has a cusp point and the derivative of $F_\mu$ has a (negative) jump of size $-\mu(\{p\})$. For example consider the case where $\mu$ is composed of 3 equidistant Dirac masses of weight $\tfrac{2}{3}$, $\tfrac{1}{6}$ and  $\tfrac{1}{6}$. We have $\mu= \tfrac{1}{6}\delta_{p_1} +\tfrac{2}{3}\delta_{p^*}  +\tfrac{1}{6}\delta_{p_2}$ with $p_1 = R_{\frac{2\pi}{3}}p^*$ and $p_2 =  R_{-\frac{2\pi}{3}}p^*$. See Figure \ref{fig.1} where we have plotted $F_{\mu_{p^*}}$, that is $F_\mu$ read in the normal coordinate system centered at $p^*$. The functional $F_{\mu_{p^*}}$ has three cusp points at $-\pi$, $-\tfrac{\pi}{3}$ and $\tfrac{\pi}{3} $ corresponding to cut loci of the three Dirac masses. 

\begin{proof}
For convenience we omit in this proof the superscript $p_0$ by writing $\theta_p = \theta_p^{p_0}$ for all $p \in\S^1$. In the coordinate system centered at $p_0$ we have for all $\theta_p\in[-\pi,\pi[$
\begin{align}\label{eq.Ff}
F_{\mu_{p_0} }(\theta_p) =  \frac{1}{2}\int_{\R} t^2 d\mu_{p_0}(t ) - \theta_p m(\mu_{p_0})  &+ \frac{1}{2}\theta_p^2  + 2\pi\Big( g_{\mu_{p_0} }^+(\theta_p) \1_{[0,\pi[ } (\theta_p)+  g_{\mu_{p_0} }^-(\theta_p)\1_{[-\pi,0[ }, (\theta_p)\Big)
\end{align}
where 
$
g_{\mu_{p_0}}^+(\theta) =  \int_{-\pi}^{-\pi+\theta} (\pi + t - \theta )d\mu_{p_0}(t) \text{ and } g_{\mu_{p_0}}^-(\theta) = \int_{\theta+\pi}^{\pi} (\pi - t+\theta )d\mu_{p_0}(t).
$
% Remark that in \eqref{eq.Ff} the  polynomial part in $\theta$ given by $ \frac{1}{2}\int_{\R} t^2 \mu_{p_0}(t ) - \theta m(\mu_{p_0})+ \frac{1}{2}\theta^2$ corresponds to the usual  Fréchet function in $\R$ with its argmin at $m(\mu_{p_0})$. The extra terms $g_{\mu_{p_0}}^+$ and $g_{\mu_{p_0}}^-$ are due to the presence of the cut locus in $\S^1$.  
Hence, to prove Proposition \ref{prop,grad}, we just have to study the derivative of $g_{\mu_{p_0}}^+$ and $g_{\mu_{p_0}}^-$. % can be deduced in the same way. 

For all $\theta_p\in]0,\pi[$  and $\varepsilon\in\R$ such that $\theta_p + \varepsilon \in]0,\pi[$ we have,
 \begin{align}\label{eq.temp}
\frac{1}{\varepsilon} \Big(g_{\mu_{p_0}}^+(\theta_p+\varepsilon) - g_{\mu_{p_0}}^+(\theta_p)\Big) & = \frac{1}{\varepsilon} \int_{-\pi+\theta_p}^{-\pi+\theta_p+\varepsilon} (\pi + t - \theta_p )d\mu_{p_0}(t) -\int_{-\pi}^{-\pi+\theta_p+\varepsilon}  d\mu_{p_0}(t)
\end{align}
%We have $\lim_{\varepsilon\to 0^-}\frac{1}{\varepsilon} \int_{-\pi+\theta_p}^{-\pi+\theta_p+\varepsilon} (\pi + t - \theta_p )d\mu_{p_0}(t)\leq \lim_{\varepsilon\to 0^-} \mu_{p_0}([ -\pi+\theta_p+\varepsilon,-\pi+\theta_p[)=0$ and 
The limit from the left in equation \eqref{eq.temp} is $\lim_{\varepsilon\to 0^-} \frac{1}{\varepsilon} (g_{\mu_{p_0}}^+(\theta_p+\varepsilon) - g_{\mu_{p_0}}^+(\theta_p)) =  -\mu_{p_0}([-\pi,-\pi+\theta_{p}[)$ when $0<\theta_p<\pi $. 
The (left) derivative at $\theta_p=0$ is given by $\lim_{\varepsilon\to 0^-}\frac{1}{\varepsilon} \Big(g_{\mu_{p_0}}^-(\theta_p+\varepsilon) - g_{\mu_{p_0}}^+(\theta_p)\Big)=0$. 
Similarly, if $-\pi\leq \theta_p<0$, we have $\lim_{\varepsilon\to 0^-} \frac{1}{\varepsilon} (g_{\mu_{p_0}}^-(\theta_p+\varepsilon) - g_{\mu_{p_0}}^-(\theta_p)) =  \mu_{p_0}([\pi+\theta_{p},\pi[)$ and statement 2 is proved.

%The limits from the right are $ \lim_{\varepsilon\to 0^+} \frac{1}{\varepsilon} (g_{\mu_{p_0}}^+(\theta_p+\varepsilon) - g_{\mu_{p_0}}^+(\theta_p)) =  -\mu_{p_0}([-\pi,-\pi+\theta_{p}[)$ for $0\leq \theta_p < \pi $ for $ $

To prove statement $1$, suppose that the cut locus of $p$ is of $\mu$-measure 0. In this case, the limit from the left and from the right in equation \eqref{eq.temp} are equal as $\lim_{\varepsilon\to 0}\frac{1}{\varepsilon} \int_{-\pi+\theta_p}^{-\pi+\theta_p+\varepsilon} (\pi + t - \theta_p )d\mu_{p_0}(t) =0$ since $\mu_{p_0}(\{ \theta_{p} - \pi \}) = 0$. Thence, formula \eqref{eq.Ff} yields  $\frac{d}{d\theta}g_{\mu_{p_0}}^+(\theta_{p}) = -\mu_{p_0}([-\pi,-\pi+\theta_{p}[)$, if $\theta_{p}\in[0,\pi[  \text{ and } \frac{d}{d\theta}g_{\mu_{p_0}}^-(\theta_{p}) = \mu_{p_0}([\pi+\theta_{p},\pi[)$, if $  \theta_{p}\in[-\pi,0[.$

%		If $\theta_p = 0$  and $\mu_{p_0}(\{ -\pi \}) =0$ then $\frac{d}{d\theta}g_{\mu_{p_0}}^+(0) = \lim_{\varepsilon\to 0-} \frac{d}{d\theta}g_{\mu_{p_0}}^-(0+\varepsilon) =\lim_{\varepsilon\to 0+} \frac{d}{d\theta}g_{\mu_{p_0}}^+(0+\varepsilon) = 0$. Similarly, at $\theta_p = -\pi$ with $\mu_{p_0}(\{ 0 \}) =0$ we have $\frac{d}{d\theta}g_{\mu_{p_0}}^+(-\pi) = \lim_{\varepsilon\to 0-} \frac{d}{d\theta}g_{\mu_{p_0}}^+(\pi+\varepsilon) =\mu_{p_0}([0,\pi[)$.
		% have  $\lim_{\varepsilon\to0^+}\frac{1}{\varepsilon}(g_{\mu_{p^*}^d}^+(\varepsilon) - g_{\mu_{p^*}^d}^+(0)) = 0 =\lim_{\varepsilon\to 0^-}\frac{1}{\varepsilon}(g_{\mu_{p^*}^d}^-(\varepsilon) - g_{\mu_{p^*}^d}^-(0^-)) $.% and $\lim_{\varepsilon\to-\pi^+}\frac{1}{\varepsilon}(g_{\mu_{p^*}^d}^-(-\pi+\varepsilon) - g_{\mu_{p^*}^d}^-(-\pi)) = \mu_{p^*}^d([0,\pi[)$. 

		Finally, suppose that $p\in\S^1$ has a cut locus of positive measure. If $0\leq \theta_p<\pi $, it means that $\mu_{p_0}(\{\theta_p - \pi\})>0$ and  %By the same arguments as above, we have if $\theta_p>0 $, $\lim_{\varepsilon\to 0^-} \frac{1}{\varepsilon} (g_{\mu_{p_0}}^+(\theta_p+\varepsilon) - g_{\mu_{p_0}}^+(\theta_p)) =  -\mu_{p_0}([-\pi,-\pi+\theta_{p}[),$ and if $\theta_p<0 $, $		\lim_{\varepsilon\to 0^-} \frac{1}{\varepsilon} (g_{\mu_{p_0}}^-(\theta_p+\varepsilon) - g_{\mu_{p_0}}^-(\theta_p)) =  \mu_{p_0}([\pi+\theta_{p},\pi[),$ and the formula can be extended to $0$ and $-\pi$. 
		then, $\frac{d}{d\theta} F_{\mu_{p_0}} (\theta_p) - \lim_{\varepsilon\to 0^+}  \frac{d}{d\theta} F_{\mu_{p_0}} (\theta_p + \varepsilon) = -\lim_{\varepsilon \to 0^+}\mu_{p_0}([-\pi + \theta_p,-\pi+\theta_{p}+\varepsilon  [) = -\mu_{p_0}(\{-\pi+\theta_{p} \})$. The case $-\pi\leq \theta_p<0$ is similar and the proof of statement 3 is completed.  % and with a (negative) jump of size $ -\lim_{\varepsilon \to 0^+}\mu_{p_0}([-\pi,-\pi+\theta_{p}+\varepsilon  [) = -\mu_{p_0}(\{\pi+\theta_{p} \})$ at a point with a cut locus of positive measure. the function $\frac{d}{d\theta}F_{\mu_{p_0}}$ is left continuous everywhere on $[-\pi,\pi[$ and is continuous at each point with a cut locus of $\mu$-measure 0.
\end{proof}

\subsection{Local minimum of the Fr\'echet functional}

\begin{figure}[t]
\begin{center}
\includegraphics[width=5cm]{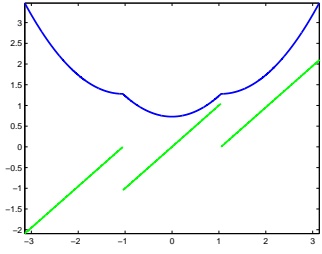}
\caption{Let $\mu= \tfrac{1}{6}\delta_{p_1} +\tfrac{2}{3}\delta_{p^*}  +\tfrac{1}{6}\delta_{p_2}$ with $p_1 = R_{\frac{2\pi}{3}}p^*$ and $p_2 =  R_{-\frac{2\pi}{3}}p^*$.  In blue: $F_{\mu_{p^*}}$. In green: $  \frac{d}{d\theta}F_{\mu_{p^*}}$.  }\label{fig.1}
\end{center}
\end{figure}

The critical points of the Fr\'echet functional are the points at which the derivative of $F_\mu$, in the sens of Proposition \ref{prop,grad}, is 0. 
As a immediate consequence of equation \eqref{eq.grad} we have 
\begin{equation}\label{eq:bar}
	\frac{d}{d\theta}F_{\mu_{p_0}}(\theta_p^{p_0}) = - m(\mu_{p} )
\end{equation}
Thus, the critical points are precisely the exponential barycenters (i.e. points $p\in\S^1$ at which $m(\mu_p) =0$). This fact was already shown in \cite{MR1187782} or \cite{MR1842295} for general manifolds. Note that a critical point of the Fr\'echet functional is not in general an extremum (see an illustration Figure \ref{fig.1}) but it is worth noticing that the minima of the Fr\'echet functional are regular points in the sens of the following result,

\begin{corollary}\label{prop.min}
%The cut locus of a global minimum of $F_\mu$ is of $\mu$-measure 0. 
Let $\mu$  be a probability measure on $\S^1$. The cut locus of a (local or global) minimum of $F_\mu$ is of $\mu$-measure 0. 
\end{corollary}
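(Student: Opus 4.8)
The plan is to argue by contradiction, exploiting the cusp behaviour recorded in statement 3 of Proposition~\ref{prop,grad}; since a global minimum is in particular a local one, it suffices to treat a local minimum. So suppose $p\in\S^1$ is a local minimum of $F_\mu$ but that its cut locus carries positive mass, $\mu(\{-p\})>0$. First I would fix a base point $p_0\in\S^1\setminus\{p,-p\}$, so that the coordinate $\theta_p:=\theta_p^{p_0}$ lies in the open interval $(-\pi,\pi)$ and is different from $0$; since $e_{p_0}$ restricts to an isometry from a neighbourhood of $\theta_p$ onto a neighbourhood of $p$, the point $\theta_p$ is an \emph{interior} local minimum of the map $\theta\mapsto F_{\mu_{p_0}}(\theta)$.

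Next I would use the elementary one-variable fact that a function attaining a local minimum at an interior point at which both one-sided derivatives exist must have nonpositive left derivative and nonnegative right derivative; in particular its left derivative is \emph{at most} its right derivative. By statements 1 and 2 of Proposition~\ref{prop,grad} — together with the explicit formula \eqref{eq.Ff}, which displays $F_{\mu_{p_0}}$ as a $\mathcal C^1$ function plus $2\pi$ times the explicitly one-sided differentiable maps $g_{\mu_{p_0}}^{\pm}$ — the one-sided derivatives of $F_{\mu_{p_0}}$ at $\theta_p$ exist and are given by the one-sided limits of the (left-continuous) derivative function: the left derivative is $\frac{d}{d\theta}F_{\mu_{p_0}}(\theta_p)$ and the right derivative is $\lim_{\varepsilon\to0^+}\frac{d}{d\theta}F_{\mu_{p_0}}(\theta_p+\varepsilon)$. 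Hence
\begin{equation*}
\frac{d}{d\theta}F_{\mu_{p_0}}(\theta_p)\;\le\;\lim_{\varepsilon\to0^+}\frac{d}{d\theta}F_{\mu_{p_0}}(\theta_p+\varepsilon).
\end{equation*}
On the other hand, since $\mu(\{-p\})>0$, statement 3 of Proposition~\ref{prop,grad} applies at $\theta_p$ and tells us that it is a cusp point at which the slope jumps by $2\pi\,\mu(\{-p\})$, i.e.
\begin{equation*}
\frac{d}{d\theta}F_{\mu_{p_0}}(\theta_p)\;-\;\lim_{\varepsilon\to0^+}\frac{d}{d\theta}F_{\mu_{p_0}}(\theta_p+\varepsilon)\;=\;2\pi\,\mu(\{-p\})\;>\;0 .
\end{equation*}
In other words the left derivative of $F_{\mu_{p_0}}$ at $\theta_p$ is \emph{strictly larger} than its right derivative (the graph has a concave corner there, like that of $-\lvert\theta-\theta_p\rvert$). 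This contradicts the inequality displayed just above, so $\mu(\{-p\})=0$, which is the assertion.

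The argument is short, and the only point requiring a little care is the bookkeeping in the second paragraph, namely that the one-sided derivatives of $F_{\mu_{p_0}}$ really do coincide with the one-sided limits of the derivative function supplied by Proposition~\ref{prop,grad}; this is essentially already contained in that proposition and its proof, so I do not expect a genuine obstacle. The conceptual content is simply that an atom of mass $\mu(\{-p\})$ at the cut locus contributes a term $-\pi\,\mu(\{-p\})\,\lvert\theta-\theta_p\rvert$ to $F_{\mu_{p_0}}$ near $\theta_p$ — a peak-shaped, concave cusp — which no local minimum can accommodate.
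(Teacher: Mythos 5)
Your proof is correct and follows essentially the same route as the paper: both arguments invoke statement 3 of Proposition~\ref{prop,grad} to show that a point whose cut locus carries positive mass is a concave corner of $F_{\mu_{p_0}}$ (left derivative strictly exceeding right derivative), which is incompatible with the first-order necessary condition at an interior local minimum. Note that your value for the jump, $+2\pi\,\mu(\{-p\})$ with the left derivative larger, is the correct one --- it is exactly what the paper's own exclusion of the sign pattern $(-,+)$ requires --- even though it differs in sign and by a factor of $2\pi$ from the $-\mu(\{-p\})$ literally written in statement 3 of Proposition~\ref{prop,grad}; that discrepancy lies in the paper, not in your argument.
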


\begin{proof}
	Let $p \in \S^1$ be a point satisfying $\mu(\{-p\}) > 0$.  For any $p_0\in\S^1$, Statement \ref{stat.3} of Proposition \ref{prop,grad} ensure that the derivative $\frac{d}{d\theta} F_{\mu_{p_0}}$ of the Fr\'chet functional has a negative jump at $\theta^{p_0}_p$. Hence, the signs of
	$
	\left(\lim_{\varepsilon \to 0^-} \frac{d}{d\theta}F_{\mu_{p_0}}(\theta_{p_m}^{p_0} +\varepsilon),\lim_{\varepsilon \to 0^+} \frac{d}{d\theta}F_{\mu_{p_0}}(\theta_{p_m}^{p_0}+\varepsilon)\right)
	$ is either $(+,+)$, $(+,-)$ or $(-,-)$.  This means that $\theta_{p}^{p_0}$ cannot be a minimum of $F_{\mu_{p_0}}$ since it would correspond to the case $(-,+)$.
\end{proof}

\begin{remark}
	Note that assumptions of Corollary 1 in \cite{pen} and Theorem 1 in \cite{Bhatt} contain a condition of the form $\mu(\{\tilde p\})$ to ensure (classical) differentiability of the Fr\'echet functional at its minimum. In the case of the circle, Corollary \ref{prop.min} shows us that the Fr\'echet functional is \emph{automatically} differentiable at its minima.
\end{remark}

\section{Necessary and sufficient condition for the existence of the Fréchet mean} \label{part.th}%Global argmins of the Fréchet functional}

\subsection{Main result}

%We present a criterion that guaranty the well definiteness of the Fr\'echet mean of a general probability measure on $\S^1$. 

\begin{theorem}\label{th:crit}
Let $\mu$ be a probability measure and $p^*\in\S^1$ be a critical point of $F_\mu$. Then, the following propositions are equivalent,
\begin{enumerate}
\item $p^*$ is a well defined Fréchet mean of $(\S^1,\mu)$ \label{prop:1}.
%\item $p^*$ is the unique minimizer of $F_\mu$ on  $\S^1$ \label{prop:2}
\item For all $0<\theta<\pi$ \label{prop:3}
$$
\int_{0}^{\theta} \lambda([-\pi,-\pi+t[) -\mu_{p^*}([-\pi,-\pi+t[)dt >0,
$$ 
and  for all $-\pi\leq\theta<0$,
$$
\int_{\theta}^{0} \lambda(]\pi+t,\pi[) -\mu_{p^*}(]\pi+t,\pi[) dt >0 ,
$$
where $\lambda$ is the uniform measure on $[-\pi,\pi[$ and $\mu_{p^*}$ is defined in \eqref{eq.mes}.
\end{enumerate}
\end{theorem}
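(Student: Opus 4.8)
The plan is to move to the normal coordinate chart centered at $p^*$ and reduce the statement to a one-dimensional computation on $[-\pi,\pi[$. Write $F:=F_{\mu_{p^*}}$ and $\mu_0:=\mu_{p^*}$. Since $p^*$ is a critical point of $F_\mu$, equation \eqref{eq:bar} forces $m(\mu_0)=0$, which is what trims the derivative formula of Proposition \ref{prop,grad}. The first step is to re-express that derivative through the uniform measure $\lambda$: as $\lambda$ has density $1/(2\pi)$ one has $\lambda([-\pi,-\pi+s[)=s/(2\pi)$ for $s\ge 0$ and $\lambda(]\pi+s,\pi[)=-s/(2\pi)$ for $s\le 0$, so that, for every $s$ whose associated point $e_{p^*}(s)$ carries a cut locus of $\mu$-measure zero — that is, for all $s$ outside an at most countable set — the first item of Proposition \ref{prop,grad} reads
\begin{equation*}
\tfrac{d}{d\theta}F(s)=
\begin{cases}
2\pi\big(\lambda([-\pi,-\pi+s[)-\mu_0([-\pi,-\pi+s[)\big),& 0<s<\pi,\\[2mm]
-2\pi\big(\lambda(]\pi+s,\pi[)-\mu_0(]\pi+s,\pi[)\big),& -\pi<s<0.
\end{cases}
\end{equation*}

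The second step is the integral representation. Since $F_\mu$ is Lipschitz (as recalled right after \eqref{eq.F}), $F$ is absolutely continuous on $[-\pi,\pi[$, so the fundamental theorem of calculus gives $F(\theta)-F(0)=\int_0^\theta \tfrac{d}{d\theta}F(s)\,ds$ for every $\theta\in[-\pi,\pi[$, with the integrand understood as the a.e.\ derivative — which agrees off a null set with the left-continuous derivative of Proposition \ref{prop,grad}, hence with the display above. Substituting then yields, for $0<\theta<\pi$,
\begin{equation*}
F(\theta)-F(0)=2\pi\int_0^{\theta}\big(\lambda([-\pi,-\pi+t[)-\mu_0([-\pi,-\pi+t[)\big)\,dt ,
\end{equation*}
and, for $-\pi\le\theta<0$,
\begin{equation*}
F(\theta)-F(0)=2\pi\int_{\theta}^{0}\big(\lambda(]\pi+t,\pi[)-\mu_0(]\pi+t,\pi[)\big)\,dt .
\end{equation*}
(I would also note that these two identities can be gotten straight from the closed form \eqref{eq.Ff} by Fubini's theorem, using $g^+_{\mu_0}(\theta)=-\int_0^\theta\mu_0([-\pi,-\pi+t[)\,dt$ and the mirror identity for $g^-_{\mu_0}$; this variant avoids a.e.\ derivatives altogether.)

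The last step reads off the equivalence. By definition $p^*$ is a well-defined Fréchet mean of $(\S^1,\mu)$ precisely when $0$ is the \emph{unique} global minimizer of $F$ on $[-\pi,\pi[$, i.e.\ when $F(\theta)-F(0)>0$ for every $\theta\in[-\pi,\pi[\,\setminus\{0\}$; splitting this range into $0<\theta<\pi$ and $-\pi\le\theta<0$ and plugging in the two displayed identities turns it verbatim into statement \ref{prop:3}. This gives \ref{prop:1} $\Leftrightarrow$ \ref{prop:3}.

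I expect the only delicate point to be the integral representation in the second step: Proposition \ref{prop,grad} only delivers a left-continuous derivative (a genuine derivative away from the atoms of $\mu$), so one must either exploit absolute continuity of the Lipschitz function $F$ to run the fundamental theorem of calculus with the a.e.\ derivative, or compute $F(\theta)-F(0)$ from \eqref{eq.Ff} by Fubini — in both cases the atoms of $\mu$ cause no trouble. Everything else is bookkeeping with the uniform measure, together with the remark that $\theta=-\pi$ (the cut locus of $p^*$) belongs to the second range and there encodes the requirement $F_\mu(-p^*)>F_\mu(p^*)$.
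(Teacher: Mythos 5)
Your proof is correct, and its overall skeleton is the same as the paper's: pass to the normal chart at $p^*$, use criticality to kill $m(\mu_{p^*})$ in the derivative formula of Proposition \ref{prop,grad}, establish the integral representation of $G_{\mu_{p^*}}(\theta)=F_{\mu_{p^*}}(\theta)-F_{\mu_{p^*}}(0)$, and read off the equivalence from ``$0$ is the unique global minimizer''. Where you genuinely diverge is in how the integral representation (the paper's Lemma \ref{lemme:g}) is justified. The paper decomposes $\mu$ into an atomless part and a purely atomic part as in \eqref{eq:decomp}, treats finitely supported atomic measures by hand, and reaches countably supported ones by a total-variation approximation plus dominated convergence. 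You instead observe that $F_{\mu_{p^*}}$ is Lipschitz, hence absolutely continuous, so the fundamental theorem of calculus applies with the a.e.\ derivative, which coincides off the (at most countable, hence null) set of atoms of the antipodal measure with the explicit formula \eqref{eq.grad}; your Fubini variant starting from \eqref{eq.Ff} is even more direct and bypasses differentiation entirely. This buys a shorter and arguably more robust argument — no case analysis on the atomic structure of $\mu$ and no limiting procedure — at the cost of invoking the Lebesgue theory of absolutely continuous functions rather than elementary calculus. A further small point in your favour: you use only the hypothesis that $p^*$ is a critical point (i.e.\ $m(\mu_{p^*})=0$), which matches the theorem's statement, whereas the paper's Lemma \ref{lemme:g} is phrased for an argmin even though its proof likewise only needs criticality. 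Your handling of the boundary cases (half-open versus open intervals, the point $\theta=-\pi$) only changes the integrands on null sets and is therefore harmless.
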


%The proof is postponed at the end of this section, as we need an intermediate result Lemma \ref{lemme:g} below. 
Theorem \ref{th:crit} gives a necessary and sufficient condition for the existence of the Fréchet mean of a general measure $\mu$ on the circle $\S^1$. This condition is given in terms of comparison between the $\mu$-measure and the uniform measure  $\lambda$ of balls centered at the cut locus of a global minimum. The important point is that the $\mu$-measure of a (small) neighborhood of the cut locus of $p^*$ cannot be larger than the uniform measure of this neighborhood.%Note that the uniform measure $\lambda$ has a density with respect to the Lebesgue measure equals to the constant function $\frac{1}{2\pi} $. 

As $\mu$ is a probability measure, the functions $t\longmapsto \lambda([-\pi,-\pi+t[) -\mu_{p^*}([-\pi,-\pi+t[) $ and  $t\longmapsto\lambda(]\pi-t,\pi[) -\mu_{p^*}(]\pi-t,\pi[)$ do not need to be  always positive for $t\in[-\pi,0[$ and $t\in[0,\pi[ $ respectively. % 
An example where this quantity is always positive is when $\mu$ admits a density which is a decreasing function of the distance to a point $p^*$, see \cite{Kaziska20081314}. In this case, the density is radially distributed around its mode $p^*$ which is, by Theorem \ref{th:crit}, the Fr\'echet mean of $\mu$. Many classical probability distributions used in circular data analysis follow this framework: von Mises distribution, wrapped normal distribution, geodesic normal distribution \cite{pen} , etc...%In particular, we have the following property,

Another well-known example of distributions that admit a well defined Fr\'echet mean is distributions with support included in an hemisphere, see \cite{Buss:2001:SAA:502122.502124}. More precisely, suppose that there exists a point $\hat p\in\S^1$ with  $\mu(\{p\in\S^1, d_{\S^1}(p,\hat p)\leq \frac{\pi}{2}\})=1 $ and $\mu(\{p\in\S^1, d_{\S^1}(p,\hat p)< \frac{\pi}{2}\})>0$. In this case, Statement \ref{prop:3} of Theorem \ref{th:crit} holds since one can show that the minimum $p^*$ of $F_\mu$ is in $ \{p\in\S^1, d_{\S^1}(p,\hat p)< \frac{\pi}{2}\}$ and that $F_{\mu_{p^*}}(\theta)-F_{\mu_{p^*}}(0) > \frac{1}{2} (\theta +\pi - 2\theta_{\hat p})^2$ for $\theta \in[-\pi,\theta_{\hat p} - \frac{\pi}{2}[$, $F_{\mu_{p^*}}(\theta)-F_{\mu_{p^*}}(0) = \frac{\theta^2}{2} $ for $\theta \in[\theta_{\hat p} - \frac{\pi}{2}, \theta_{\hat p}+\frac{\pi}{2}[$ and $F_{\mu_{p^*}}(\theta)-F_{\mu_{p^*}}(0) > \frac{1}{2} (\theta -\pi -2\theta_{\hat p} )^2$ for all $\theta \in[\theta_{\hat p}+\frac{\pi}{2},\pi[$. The case of equality corresponds to distributions with support in the boundary of the hemisphere, that is $\mu_{p^*} = (1 - \varepsilon) \delta_{\theta^{p^*}_{\hat p}-\frac{\pi}{2}} + \varepsilon \delta_{\theta^{p^*}_{\hat p}+\frac{\pi}{2}}$ with $\varepsilon = \frac{1}{\pi}\theta_{\hat p}^{p^*} + \frac{1}{2}$ and in this case, there are two global argmins at 0 and  $2\theta_{\hat p} \pm \pi$, see Figure \ref{fig.degener}.
\begin{figure}[h!]
\begin{center}
	\subfigure[]{\includegraphics[width=5cm]{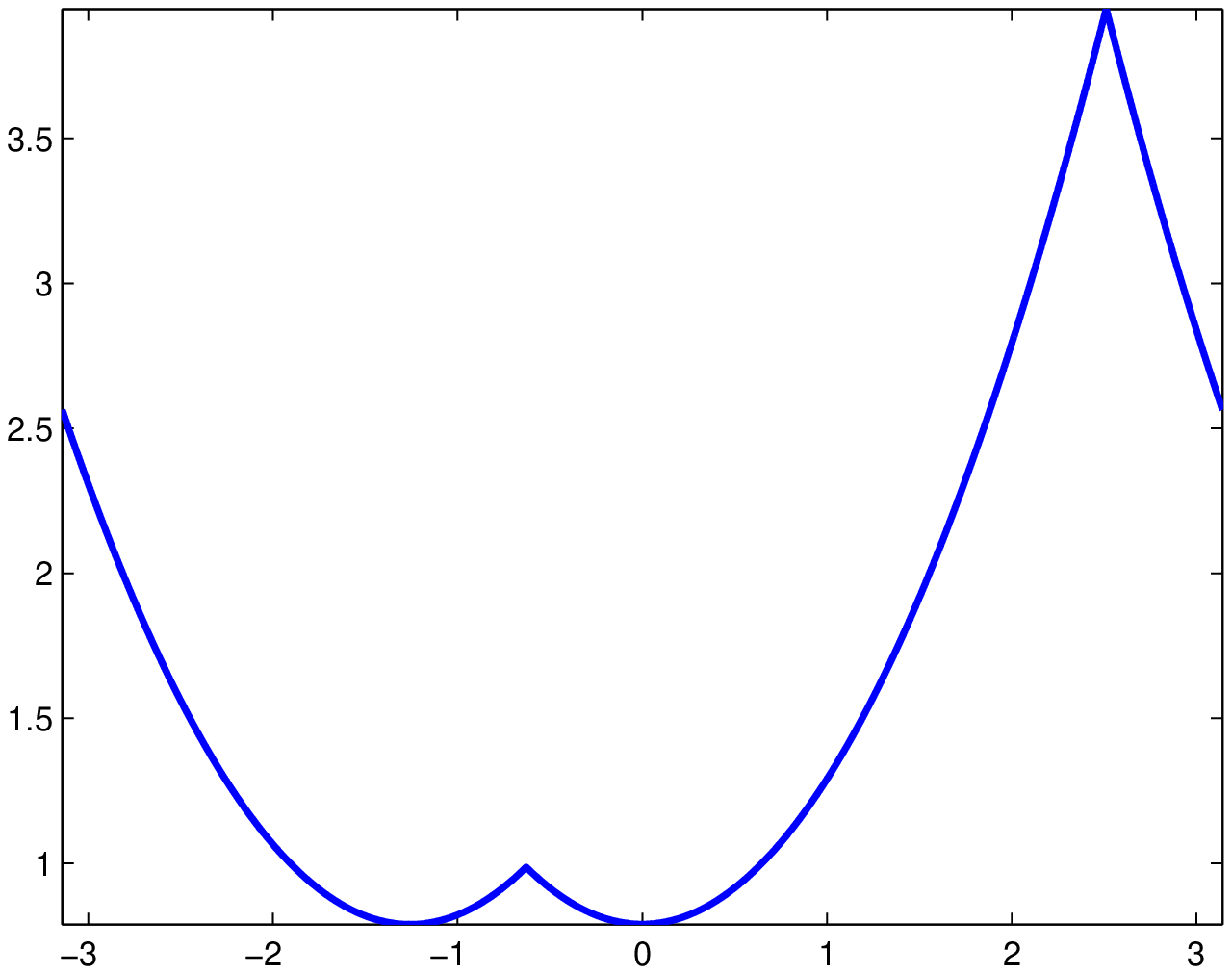}\label{fig.de1}}\qquad\qquad
	\subfigure[]{\includegraphics[width=5cm]{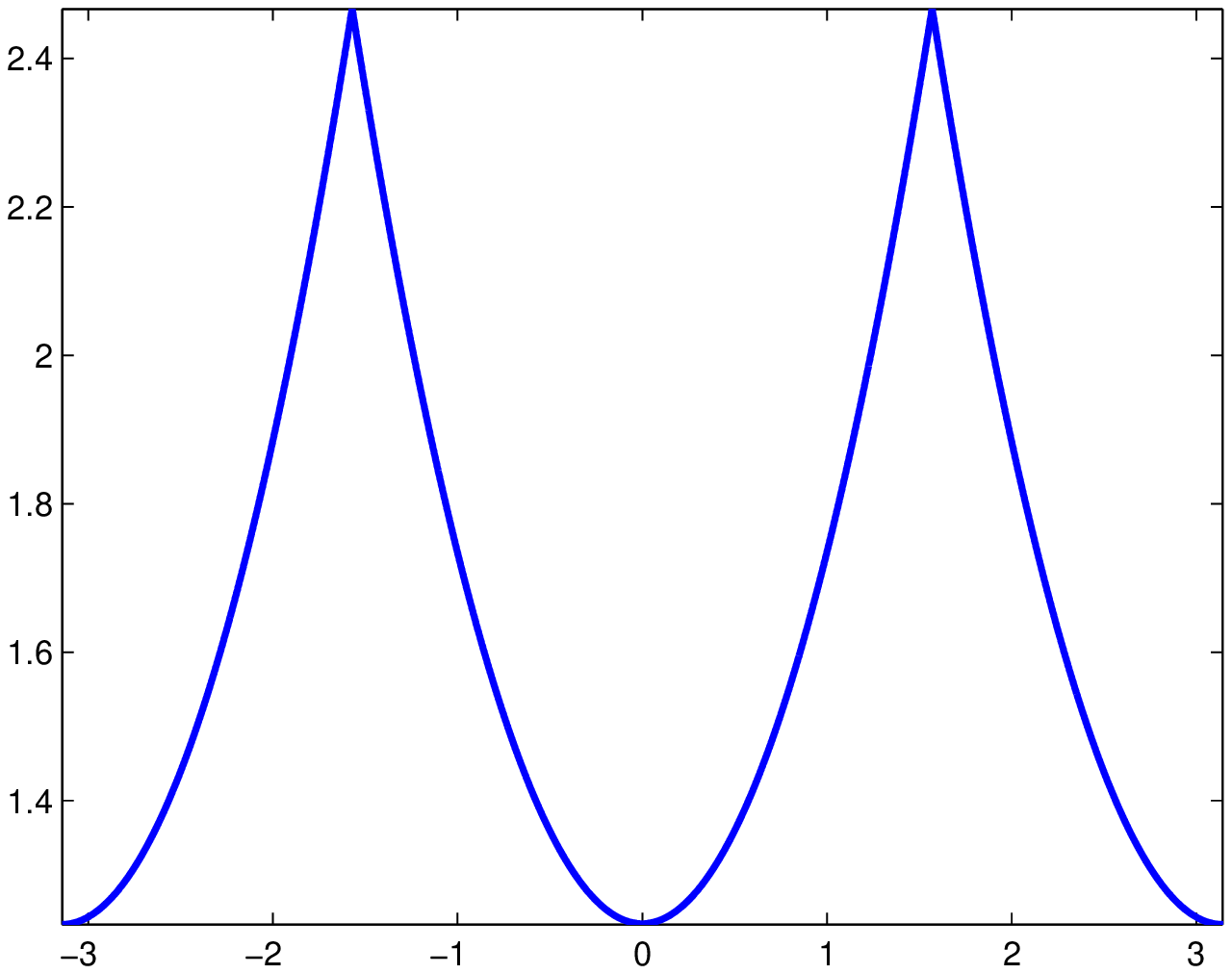}\label{fig.de2}}
	\caption{Let $\mu_\theta = (1 - \varepsilon) \delta_{\theta-\frac{\pi}{2}} + \varepsilon \delta_{\theta+\frac{\pi}{2}}$ with $\varepsilon = \frac{\theta}{\pi} + \frac{1}{2}$. \subref{fig.de1} $F_{\mu_\theta}$ with $\theta=0$. \subref{fig.de2}  $F_{\mu_\theta}$ with $\theta= \frac{3\pi}{10} $ }\label{fig.degener}
\end{center}
\end{figure}

\subsection{Proof of Theorem \ref{th:crit}}

As already noticed, there exists at least one global argmin $p^*\in\S^1$ of $F_\mu$ since $F_\mu$ is a continuous function defined on the compact set $\S^1$. Moreover, Proposition \ref{prop,grad} and Corollary \ref{prop.min} ensure that $p^*$ is a regular critical point of $F_\mu$, i.e. a zero of the derivative  with a cut locus of $\mu$-measure 0. %Note also  that Statement \ref{prop:1}. and \ref{prop:2b}. are equivalent as they are simple reformulations of the definition of a well defined Fréchet mean. To prove Theorem  \ref{th:crit} we show that Statement \ref{prop:2b} is equivalent to Statement \ref{prop:3}. The proof relies on the computation of the derivative of $G_{\mu}$ expressed in a well chosen coordinate system. 

%Corollary \ref{lemme:c} ensures that $p^*$ is an exponential barycenter,  i.e satisfy $m(\mu_{p^*})=0$. Hence 
In the normal coordinate system centered at $p^*$ the functional $G_{\mu_{p^*}}(\theta)= F_{\mu_{p^*}}(\theta) - F_{\mu_{p^*}}(0) $ has a particularly simple expression thanks to equation \eqref{eq:bar}. Indeed, we have
\begin{align*}
G_{\mu_{p^*}}(\theta) = \frac{1}{2}  \theta^2 + 2\pi \1_{[0,\pi[}(\theta) & \int_{-\pi}^{\theta-\pi} (\pi + t -\theta )d\mu_{p^*}(t)   + 2\pi  \1_{[-\pi,0[}(\theta)\int_{\theta+\pi}^{\pi} (\pi - t +\theta )d\mu_{p^*}(t).
\end{align*}
It is clear that Statement \ref{prop:1} is equivalent to the fact that the unique zero of $G_{\mu_{p^*}}$ is 0. Thence, Theorem 5.1 will be an easy consequence of Lemma \ref{lemme:g} below since we have $ \lambda([-\pi,-\pi+t[) =\frac{t}{2\pi}$ for all $0\leq t<\pi$.%Using Proposition \ref{prop,grad} we have the following result,
%The proof of Theorem \ref{th:crit} is almost done. The uniform measure on $[-\pi,\pi[$ has a density of $\frac{1}{2\pi}$, that is $\lambda([-\pi,-\pi+t[) =\frac{t}{2\pi}$ for all $0\leq t<\pi$. Then  Lemma \ref{lemme:g} ensures that Statement \ref{prop:3}. is equivalent to $G_{\mu_p}(\theta)>0$ for $\theta\in[-\pi,0[\cup]0,\pi[$ and thus Statement \ref{prop:2b}. 
\hfill $\Box $

\begin{lemma}\label{lemme:g} 
Let $\mu$ be a probability measure on $\S^1$ and $p^*\in\S^1$ be an argmin of $F_\mu$. Then for any $\theta\in[-\pi,\pi[$
$$
G_{\mu_{p^*}}(\theta) = 2\pi\begin{cases}{\displaystyle \int_{0}^{\theta} \frac{t}{2\pi} -\mu_{p^*}([-\pi,-\pi+t[)dt  },  & \text{ if } 0\leq \theta< \pi,   \\ {\displaystyle  \int_{\theta}^{0} \frac{-t}{2\pi} -\mu_{p^*}([\pi+t,\pi[) dt }, & \text{ if } -\pi\leq \theta<0 \end{cases}
$$
\end{lemma}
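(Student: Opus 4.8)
The plan is to start from the explicit formula for $G_{\mu_{p^*}}(\theta) = F_{\mu_{p^*}}(\theta) - F_{\mu_{p^*}}(0)$ given just above the lemma, and show that both sides agree by differentiating. Concretely, I would treat the two cases $0 \le \theta < \pi$ and $-\pi \le \theta < 0$ separately; by symmetry of the argument I will only detail the first. For $0 \le \theta < \pi$ we have
\begin{equation*}
G_{\mu_{p^*}}(\theta) = \frac{1}{2}\theta^2 + 2\pi \int_{-\pi}^{\theta-\pi}(\pi + t - \theta)\,d\mu_{p^*}(t).
\end{equation*}
On the other hand the claimed right-hand side is $H(\theta) := 2\pi\int_0^\theta \big(\tfrac{t}{2\pi} - \mu_{p^*}([-\pi,-\pi+t[)\big)\,dt = \tfrac12\theta^2 - 2\pi\int_0^\theta \mu_{p^*}([-\pi,-\pi+t[)\,dt$. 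So it suffices to prove that
\begin{equation*}
\int_{-\pi}^{\theta-\pi}(\pi + t - \theta)\,d\mu_{p^*}(t) = -\int_0^\theta \mu_{p^*}([-\pi,-\pi+t[)\,dt,
\end{equation*}
together with the fact that both sides vanish at $\theta = 0$ (which is immediate: the left integral is over an empty interval, the right integral is zero).

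The natural tool is to recognize the left-hand integrand $\pi + t - \theta$, for $t \in [-\pi, \theta-\pi[$, as itself an integral: $\pi + t - \theta = -\int_{t+\pi}^{\theta}\!ds$ when $t + \pi < \theta$, i.e. $\pi + t - \theta = -\int_0^\theta \1_{\{t < -\pi + s\}}\,ds$. Substituting this and applying Fubini–Tonelli (everything is nonnegative after pulling out the sign, and $\mu_{p^*}$ is a finite measure) gives
\begin{equation*}
\int_{-\pi}^{\theta-\pi}(\pi + t - \theta)\,d\mu_{p^*}(t) = -\int_0^\theta \Big(\int_{[-\pi,\pi[} \1_{\{t < -\pi + s\}}\,d\mu_{p^*}(t)\Big)ds = -\int_0^\theta \mu_{p^*}([-\pi, -\pi+s[)\,ds,
\end{equation*}
which is exactly what is needed. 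The analogous computation for $-\pi \le \theta < 0$ works the same way, writing $\pi - t + \theta = -\int_{\theta}^{0}\1_{\{t > \pi + s\}}\,ds$ for $t$ in the relevant range and again swapping the order of integration; note that here the outer integral is $\int_\theta^0$ and one must track the sign carefully, but it reproduces $2\pi\int_\theta^0\big(\tfrac{-t}{2\pi} - \mu_{p^*}([\pi+t,\pi[)\big)dt$.

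I do not expect a serious obstacle here — the result is essentially an application of the layer-cake / Fubini identity $\int f\,d\mu = \int_0^\infty \mu(\{f > s\})\,ds$ adapted to the affine ramp functions appearing in $G_{\mu_{p^*}}$. The one point that warrants care is the endpoint bookkeeping: the intervals are half-open ($[-\pi,-\pi+t[$ versus $[-\pi,-\pi+t]$, and $[\pi+t,\pi[$), so when I rewrite the ramp as an integral of an indicator I should be consistent about whether the indicator is $\1_{\{t < -\pi+s\}}$ or $\1_{\{t \le -\pi + s\}}$ — the two differ only on a $ds$-null set (for each fixed $t$, the set of $s$ with $-\pi + s = t$ is a single point), so after integrating in $s$ the choice is immaterial and the half-open convention in the statement is recovered. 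A secondary, purely cosmetic, point is that the hypothesis "$p^*$ is an argmin of $F_\mu$" is used only through equation \eqref{eq:bar} (which made the linear term $-\theta\, m(\mu_{p^*})$ drop out, so that $G_{\mu_{p^*}}$ has the displayed form with no first-order term); this was already invoked in deriving the formula for $G_{\mu_{p^*}}$ preceding the lemma, so nothing further about $p^*$ is required in the proof itself.
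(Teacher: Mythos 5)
Your proof is correct, and it takes a genuinely different route from the paper's. The paper proves the lemma by decomposing $\mu$ into a non-atomic part and a purely atomic part, applying the fundamental theorem of calculus to the derivative formula \eqref{eq.grad} (which is $C^0$, resp.\ piecewise continuous, in those two cases), and then handling countably many atoms by truncating the sum $\sum_j \omega_j\delta_{x_j}$ and passing to the limit in total variation with dominated convergence. You instead start from the closed-form expression of $G_{\mu_{p^*}}$ displayed just before the lemma, rewrite the ramp $\pi+t-\theta$ as $-\int_0^\theta \1_{\{t<-\pi+s\}}\,ds$, and swap the order of integration by Fubini--Tonelli. Your computation checks out: for $s\in[0,\theta]$ the inner $\mu_{p^*}$-integral of the indicator is exactly $\mu_{p^*}([-\pi,-\pi+s[)$, the case $\theta<0$ is symmetric (up to the endpoint $\{\pi+s\}$, which as you note contributes only on a Lebesgue-null set of $s$), and the argmin hypothesis enters only through $m(\mu_{p^*})=0$, exactly as in the paper. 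Your route is arguably cleaner: it treats all probability measures uniformly, with no atomic/non-atomic case split and no approximation step, and it makes transparent why the half-open interval conventions do not matter. What the paper's route buys is that it recycles the differentiability analysis of Proposition \ref{prop,grad}, so the lemma reads as the integrated form of the gradient formula; your argument bypasses the derivative of $F_{\mu_{p^*}}$ entirely.
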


\begin{proof}
The probability measure $\mu$  can be decomposed as follow, 
\begin{equation}\label{eq:decomp}
\mu = a \mu^d + (1- a )\mu^\delta, \qquad  0\leq a \leq 1,
\end{equation}
where $\mu^d$ is a probability measure such that $\mu_d(\{p\}) =0$ for all $p\in\S^1$ and $\mu_\delta = \sum_{j=1}^{+\infty} \omega_j\delta_{p_j}$ where $\sum_{j=0}^{+\infty} \omega_j=1$ and the $p_j$'s are in $\S^1$. Hence, we consider the two cases separately : first, when the measure is non atomic, and then, when it is purely atomic. The general case follows immediately in view of equation \eqref{eq:decomp}. %In what follow we put,
%$$
%g_{\mu_{p^*}}^+(\theta) =  \int_{\theta+\pi}^{\pi} (\pi + t - \theta )d\mu_{p^*}(t),\ \theta\in[0,\pi[ \qquad \text{and} \qquad g_{\mu_{p^*}}^-(\theta) = \int_{\theta+\pi}^{\pi} (\pi - t+\theta )d\mu_{p^*}(t),\  \theta\in[-\pi,0[ 
%$$
%and for any function $f$ we use the notation $f(0^-) = \lim_{\theta\to 0^-} f(\theta)$.

First, assume that $\mu$ is an atomless measure of $\S^1$. Proposition \ref{prop,grad} ensures that $F_{\mu}$ is continuously differentiable everywhere and the real function $F_{\mu_{p^*}}$ is of class $\mathcal C^1$ on $[-\pi,\pi[$. Formula \eqref{eq.grad} and the fundamental theorem of calculus gives for all  $\theta\in[-\pi,\pi[ $
\begin{equation}\label{eq.MA}
G_{\mu_{p^*}}(\theta) =  \int_{0}^{\theta}tdt - 2\pi\begin{cases} \int_{0}^{\theta} \mu_{p^*}([-\pi,-\pi+t[) dt, & \text{ if }\quad 0\leq \theta< \pi, \\ \int^{0}_{\theta} \mu_{p^*}([\pi+t,\pi[) dt, & \text{ if }\quad -\pi\leq \theta<0 .\end{cases}
\end{equation}
Consider now the case where $\mu$ is a purely atomic measure. First, we treat the case where the number of mass of Dirac in the sum is finite, i.e $\mu = \sum_{j=1}^n \omega_j \delta_{p_j}$, $n\in\NN$ .  Recall that $F_{\mu_{p^*}}$ is a Lipschitz function on $[-\pi,\pi[$. Proposition \ref{prop,grad} ensures that the derivative is piecewise continuous and formula \eqref{eq.grad} holds for all $\theta \in [-\pi,\pi[ \setminus \{\theta_{\tilde p_j}\}_{j=1}^n$, i.e points that have a cut locus of $\mu$-measure 0. 
%The derivative of $F_\mu$ is piecewise continuous Proposition \ref{prop,grad} is well defined.
% ensures that $F_{\mu_{p^*}}$ is differentiable.
% For all $j=1,\ldots,n$, $\tilde p_j = -p_j$ denotes the cut locus of $x_j$.  Proposition \ref{prop,grad} ensures that $F_{\mu_{p^*}}$ is differentiable. Moreover $\frac{d}{d\theta}F_{\mu_{p^*}} $ is bounded and piecewise continuous since the points of discontinuity of $\mu_{p^*}^d([\pi+t,\pi[)$ and $\mu_{p^*}^d([-\pi,-\pi+t[) $ are precisely $\{\theta_{\tilde x_j}\}_{j=1}^n$.
Hence for all  $\theta\in[-\pi,\pi[ $, equation \eqref{eq.MA} holds too.

To treat the case where $ \mu = \sum_{j=1}^{+\infty} \omega_j\delta_{x_j} $ we proceed by approximation. Let $\phi(n) = \{ j\in\NN \ | \ \omega_j \geq \frac{1}{2^n} \}$ and remark that $\Card (\phi(n)) <+\infty$ for all $n\in\NN$ since $\sum_{j=1}^{+\infty} \omega_j =1$. Then if $\nu^n_{p^*} = \frac{1}{c(n)} \sum_{j\in\phi(n)} \omega_j \delta_{x_j}$, where $c(n) = \sum_{j\in\phi(n)} \omega_j $ is a normalizing constant, we have for all  $\theta\in[-\pi,\pi[ $,
$$
G_{\nu^n_{p^*}}(\theta) = \int_{0}^{\theta}tdt - 2\pi\begin{cases} \int_{0}^{\theta} \nu^n_{p^*}([-\pi,-\pi+t[) dt, & \text{ if }\qquad 0\leq \theta< \pi,\\ \int^{0}_{\theta} \nu^n_{p^*}([\pi+t,\pi[) dt, & \text{ if }\qquad -\pi\leq \theta<0. \end{cases}
$$
The sequence $(\nu^n_{p^*})_{n\geq 1}$ converges to $\mu$ in total variation. By the dominated convergence Theorem for all  $\theta_p\in [-\pi,\pi[$, $G_{\nu^n_{p^*}}(\theta)$ converge as $n\to\infty$ to \eqref{eq.MA}.
%$$
%g_{\mu_{p^*}^{\delta}}(\theta) = \int_0^{\theta}tdt - 2\pi\begin{cases} \int_{0}^{\theta} \mu_{p^*}^\delta([-\pi,-\pi+t[) dt, & \text{ if }\quad 0\leq \theta< \pi,  \\ \int^{0}_{\theta} \mu_{p^*}^\delta([\pi+t,\pi[) dt, & \text{ if }\quad -\pi\leq \theta<0. \end{cases}
%$$
\end{proof}

\subsection{The criterion $P(\alpha,\varphi)$}

Although the necessary and sufficient condition of Theorem \ref{th:crit} is a key step to understand the problem of non uniqueness of the Fr\'echet mean on $\S^1$, it is of little practice interest: %to use the characterization, 
we have to know \emph{a priori} a critical point $p^*$ of the Fr\'echet functional. In this section, we derive sufficient conditions of existence with no restriction on the support of the probability measure and that are easily usable.% and that ensure the well definiteness of the Fréchet mean. %We first check that  Theorem \ref{th:crit} allows us to recover and extend the already known cases described Section \ref{part.kc} and we then give a new criterion that does not restrict the support of the measure.
\begin{figure}[!t]
\begin{center}
%\subfigure[]{
\includegraphics[height=5cm]{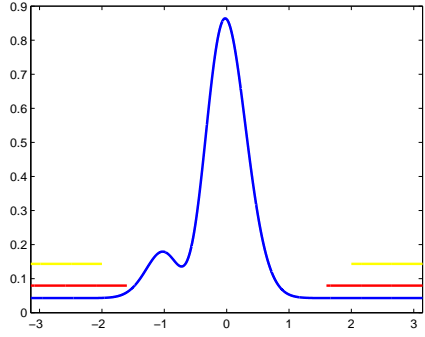} \label{CHAP3.fig.exemplecrit}
%}\qquad\qquad\subfigure[]{\includegraphics[height=5cm]{./figures/exemplecritfinal2} \label{CHAP3.fig.exemplecrit2}}
\caption[]{An example of distribution satisfying $P(\alpha,\varphi)$. 
%\subref{CHAP3.fig.exemplecrit} 
Plot of the density $f_p$ in blue with the bounds $P(p,0.1,2)$ in yellow and $P(p,0.5,1.6)$ in red.% \subref{CHAP3.fig.exemplecrit2} Plot of the function $\theta\longmapsto\1_{[0,,\pi[}(\theta)\int_{0}^{\theta} f_p(t-\pi)dt + \1_{[-\pi,0[}(\theta)\int_{0}^{\theta}-f(\pi+t)dt$ in blue with the bounds corresponding to $P(p,0.1,2)$ in yellow and $P(p,0.5,1.6)$ in yellow.
}\label{CHAP3.fig.exemplecritere}
\end{center}
\end{figure}
Let us introduce the following definition : 

\begin{definition}\label{def.P}
Let $f:\S^1 \longrightarrow \R^+$ be a probability density, $p\in\S^1$, $\alpha\in]0,1]$ and $\varphi\in]0,\pi[$. We say that $f$ satisfies the property $P(p,\alpha,\varphi)$ if for all $|\theta|\geq \varphi$
 \begin{equation}\label{eq.crit}
%\frac{1}{2\pi} - f_{p}(\theta) \geq \frac{\alpha}{2\pi},
f_{p}(\theta) \leq \frac{1-\alpha}{2\pi},
\end{equation}
where $f_p = f\circ e_p : [-\pi,\pi[\longrightarrow\R^+$. Moreover, we say that $f \in P(\alpha,\varphi)$ if there is a $p\in\S^1$ such that $f$ satisfies  $P(p,\alpha,\varphi)$.
\end{definition}
\noindent The parameters $\alpha$ and $\varphi$ control the concentration of $\mu$ around $p$. The idea is to control the mass lying in the complementary of the ball $B(p,\varphi)$, % Note that is equivalent to the fact that $f_{p}(\theta)\leq \frac{1-\alpha}{2\pi}$ for $|\theta|\geq \varphi$
see Figure \ref{CHAP3.fig.exemplecritere}. We have the following properties : 
\begin{lemma}\label{lemme.properties}
Let $f:\S^1\longrightarrow\R$ be a probability density on the circle. Then
\begin{enumerate}
\item $P(p,\alpha_1,\varphi_1) \Longrightarrow P(p,\alpha_2,\varphi_2)$ if $\alpha_1 \geq \alpha_2 $ and $\varphi_1 \leq \varphi_2$.
\item Let $p_1,p_2\in\S^1$ and $\varphi< \frac{\pi}{2}$. If $d_{\S^1}(p_1,p_2 )< \pi-\varphi$ then $P(p_1,\alpha,\varphi) \Longrightarrow P(p_2,\alpha, \varphi + d_{\S^1}(p_1,p_2 ) )$. %If $d_{\S^1}(p_1,p_2 )< \min\{ \varphi, \pi-\varphi\}$ then $P(p_1,\alpha,\varphi) \Longrightarrow P(p_2,\alpha, \varphi + d_{\S^1}(p_1,p_2 ) ) $
\item  \label{lemm.prp3} If $f$ satisfies  $P(p,\alpha,\varphi)$ then $|m(\mu_p)| \leq \varphi + \frac{1-\alpha}{4\pi}(\pi -\varphi )^2 $.%\leq (1-\alpha)(\pi-\varphi) + \varphi $
\end{enumerate}
%If the density . Moreover 
\end{lemma}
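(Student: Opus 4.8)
The plan is to first recast the hypothesis $P(p,\alpha,\varphi)$ in purely metric terms and then dispatch the three items. Since $e_p(\theta)=R_\theta p$ and $d_{\S^1}(e_p(\theta),p)=\min_{k\in\Z}|\theta+2\pi k|=|\theta|$ for $\theta\in[-\pi,\pi[$, the bound \eqref{eq.crit} is equivalent to
\[
 f(x)\le \tfrac{1-\alpha}{2\pi}\qquad\text{for every }x\in\S^1\text{ with }d_{\S^1}(x,p)\ge\varphi .
\]
I would record this equivalence first, because it makes items 1 and 2 immediate. \textbf{Item 1:} if $\alpha_1\ge\alpha_2$ and $\varphi_1\le\varphi_2$, then $|\theta|\ge\varphi_2$ implies $|\theta|\ge\varphi_1$, whence $f_p(\theta)\le\frac{1-\alpha_1}{2\pi}\le\frac{1-\alpha_2}{2\pi}$. \textbf{Item 2:} assuming $P(p_1,\alpha,\varphi)$ and writing $d=d_{\S^1}(p_1,p_2)<\pi-\varphi$ (so that $\varphi+d\in\,]0,\pi[$ and the conclusion is a well-formed instance of the definition), any $x$ with $d_{\S^1}(x,p_2)\ge\varphi+d$ satisfies $d_{\S^1}(x,p_1)\ge d_{\S^1}(x,p_2)-d\ge\varphi$ by the triangle inequality, hence $f(x)\le\frac{1-\alpha}{2\pi}$; therefore $f$ satisfies $P(p_2,\alpha,\varphi+d)$.

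For \textbf{item 3}, since $\mu$ has density $f$, the pushforward $\mu_p$ has density $f_p$ on $[-\pi,\pi[$ and $m(\mu_p)=\int_{-\pi}^{\pi}t\,f_p(t)\,dt$. I would split this integral over $[-\pi,-\varphi[$, $[-\varphi,\varphi[$ and $[\varphi,\pi[$ and bound each piece from above: on $[-\varphi,\varphi[$ use $t\le\varphi$ to get $\int_{-\varphi}^{\varphi}t f_p\le\varphi\int_{-\varphi}^{\varphi}f_p$; on $[-\pi,-\varphi[$ use $t<0$ to get $\int_{-\pi}^{-\varphi}t f_p\le 0$; on $[\varphi,\pi[$ write $t=\varphi+(t-\varphi)$ with $t-\varphi\ge0$, so the cap applies to the nonnegative remainder and
\[
 \int_{\varphi}^{\pi}t f_p(t)\,dt\le\varphi\int_{\varphi}^{\pi}f_p(t)\,dt+\frac{1-\alpha}{2\pi}\int_{\varphi}^{\pi}(t-\varphi)\,dt=\varphi\int_{\varphi}^{\pi}f_p(t)\,dt+\frac{1-\alpha}{4\pi}(\pi-\varphi)^2 .
\]
Summing the three bounds and using $\int_{-\pi}^{\pi}f_p=1$ gives $m(\mu_p)\le\varphi+\frac{1-\alpha}{4\pi}(\pi-\varphi)^2$; applying the same argument to $-m(\mu_p)$ (the cap holds on $[-\pi,-\varphi[$ as well, so the roles of the two outer intervals are exchanged) yields the matching lower bound, and hence $|m(\mu_p)|\le\varphi+\frac{1-\alpha}{4\pi}(\pi-\varphi)^2$.

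Items 1 and 2 are routine once the metric reformulation is in place, so the only real obstacle is obtaining the \emph{sharp} constant in item 3. The crude estimate $\int_{\varphi}^{\pi}|t|\,f_p\le\frac{1-\alpha}{2\pi}\int_{\varphi}^{\pi}t\,dt=\frac{1-\alpha}{4\pi}(\pi^2-\varphi^2)$ is too wasteful and would produce $\frac{1-\alpha}{4\pi}(\pi^2-\varphi^2)$ in place of $\frac{1-\alpha}{4\pi}(\pi-\varphi)^2$; the point is to charge the linear offset $\varphi$ against the total mass (which is at most $1$) \emph{before} invoking the density bound, applying it only to the centred remainder $t-\varphi\ge0$. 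Arranging this peeling correctly is the crux of the argument; the rest is bookkeeping.
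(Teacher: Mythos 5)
Your proof is correct and follows essentially the same route as the paper's: item 1 is read off the definition, item 2 amounts to showing that the region $\{d_{\S^1}(\cdot,p_2)\ge\varphi+d\}$ is contained in $\{d_{\S^1}(\cdot,p_1)\ge\varphi\}$ (you do this via the triangle inequality where the paper writes out the coordinate change explicitly, but it is the same fact), and item 3 uses exactly the paper's peeling $t=\varphi+(t-\varphi)$, charging $\varphi$ to the total mass and the density cap only to the remainder, which is what produces the constant $\frac{1-\alpha}{4\pi}(\pi-\varphi)^2$. The only cosmetic difference is in item 3, where the paper first reduces to a measure supported on $[0,\pi[$ while you bound the contribution of $[-\pi,-\varphi[$ by zero directly; both are fine.
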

\begin{proof}
The first proposition is obvious in view of the Definition \ref{def.P}. 

To prove the second claim suppose that $0< \theta_{p_2}^{p_1} = -\theta_{p_1}^{p_2} \leq \pi-\varphi%\min\{ \varphi, \pi-\varphi\}
$ (the other case is similar) and write 
$$
f_{p_2}(\theta) = f_{p_1}(\theta + \theta_{p_2}^{p_1})\1_{[-\pi,\pi-\theta^{p_1}_{p_2}[}(\theta) +  f_{p_1}(\theta + \theta_{p_2}^{p_1} -2\pi)\1_{[\pi-\theta^{p_1}_{p_2},\pi [}(\theta).
%f_{p_2}(\theta) = f_{p_1}(\theta + \theta_{p_2}^{p_1})\1_{[-\pi,\pi-\theta_{p_2}^{p_1}  [}(\theta) +  f_{p_1}(\theta + \theta_{p_2}^{p_1} -2\pi)\1_{[\pi-\theta_{p_2}^{p_1} ,\pi [}(\theta).
$$ 
In particular, it implies that $\pi>\pi -\theta_{p_2}^{p_1}\geq \varphi$ and since $P(p_1,\alpha,\varphi)$ holds, we have $f_{p_2}(\theta) \leq \frac{1-\alpha}{2\pi}$, if $\theta \geq \varphi - \theta_{p_2}^{p_1}$ or $\theta \leq -\varphi -\theta_{p_2}^{p_1}$. This is equivalent to the fact that $P(p_2,\alpha, \min\{ | -\varphi- \theta_{p_2}^{p_1}| ,| \varphi- \theta_{p_2}^{p_1}|  \}) =P(p_2,\alpha, \varphi + \theta_{p_2}^{p_1})$ holds. The case $ \varphi-\pi \leq \theta_{p_2}^{p_1} \leq 0$ is similar and we have $P(p_2,\alpha, \varphi - \theta_{p_2}^{p_1}) $. Finally recall that $|\theta_{p_2}^{p_1}| = d_{\S^1}(p_1,p_2 )$ and the property is proved.

%For the last proposition we show that the upper bound is attained for $\mu_p = (1 - \frac{\pi-\varphi}{2\pi} ) \delta_\varphi + (1-\alpha) \1_{[\varphi,\pi[}d\lambda$, i.e 'push the mass as far as possible'. First, 
To show the last claim, we only need to consider the case where $\mu_p$ has its support on $[0,\pi[$. Indeed, $\mu_p = \omega\mu_p^- + (1-\omega)\mu^+_p$ where $\mu_p^-([-\pi,0[) = \mu^+_p([0,\pi[) =1$ and $0\leq \omega\leq 1$. It yields that $m(\mu_p)= \int_{\R} t d(\omega\mu_p^- + (1-\omega)\mu^+_p) =\int_{\R^+} t d(- \omega\mu_p^- + (1-\omega)\mu^+_p) \leq \int_{\R^+} t d\mu^+_p$. % which is clearly maximum when $\omega=0$ %{\red : plus simple  : mettre une inegalité} . 
Then, if the density $f_p$  of $\mu_p$ has its support in $[0,\pi[$ we have
\begin{align*}
m(\mu_p)& \leq \varphi\left(1-\int_\varphi^\pi f_p(t) dt \right)+ \int_\varphi^\pi t  f_p(t) dt %&= \varphi+ \int_\varphi^\pi (t-\varphi)  f_p(t) dt 
\leq \varphi + \frac{1-\alpha}{2\pi}\int_\varphi^\pi (t-\varphi) dt,
%|m(\mu_p)| & \leq \int_{-\pi}^{-\varphi} |t|f_p(t)dt +\int_{-\varphi}^{\varphi} |t|f_p(t)dt +\int_{\varphi}^{\pi} |t|f_p(t)dt \\
%& \leq \pi\frac{1-\alpha}{2\pi} (\pi - \varphi) + \varphi\int_{-\varphi}^{\varphi}  f_p(t)dt+ \pi\frac{1-\alpha}{2\pi} (\pi - \varphi) \leq (1-\alpha)(\pi - \varphi) + \varphi.
\end{align*}
which gives the result.
\end{proof}

If the density $f$ is sufficiently concentrated around a critical point $p^*$ of $F_\mu$ then, this point is the Fréchet mean of $\mu$. More precisely we have the following result,

\begin{proposition}\label{prop:suf}
Let $\mu$ be a probability measure with density $f:\S^1\longrightarrow\R^+$ and $p^*\in\S^1$ be a critical point of $F_\mu$. %, i.e. a point satisfying $m(\mu_{p^*}) = \int_{-\pi}^{\pi} tf_{p^*}(t)dt = 0$. 
If $f$ satisfies $P(p^*,\alpha,\varphi)$ with $\alpha\in]0,1]$ and $0<\varphi<\varphi_\alpha = \pi\frac{\sqrt{\alpha}}{1 + \sqrt{\alpha}}$ 
%Let $f : [-\pi,\pi[ \longrightarrow \R^+$ be a function satisfying $\int_{-\pi}^{\pi} f(t)dt =1 $, $\int_{-\pi}^{\pi} tf(t)dt = 0$ and such that there is $0<\alpha\leq 1$,
% \begin{equation}\label{eq.crit}
%\frac{1}{2\pi} - f(\theta) \geq \frac{\alpha}{2\pi}
%\end{equation}
%for all $|\theta|\geq \frac{\pi}{1 + \sqrt{\alpha}}= \varphi_\alpha$. Consider the probability measure on $\S^1$ given by $\mu = f^{p^*}d\lambda$ where  $f^{p^*} = f\circ e_{p^*}^{-1}: \S^1 \longrightarrow\R^+$ for some  $p^*\in \S^1$.  %andbe a  with a density $f\circ e_{p^*}:\S^1\longrightarrow\R^+$. Suppose that there is a $\hat p\in\S^1$ 
 then, $\mu$ admits a well defined Fréchet mean at $p^*$.
\end{proposition}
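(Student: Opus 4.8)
The plan is to apply the necessary and sufficient condition of Theorem~\ref{th:crit} directly: since $p^*$ is assumed to be a critical point of $F_\mu$, it suffices to verify Statement~\ref{prop:3}, i.e. that for all $0<\theta<\pi$ the integral $\int_0^\theta \big(\tfrac{t}{2\pi} - \mu_{p^*}([-\pi,-\pi+t[)\big)\,dt$ is strictly positive, and symmetrically on $[-\pi,0[$. By the explicit formula of Lemma~\ref{lemme:g}, this is the same as showing $G_{\mu_{p^*}}(\theta)>0$ for all $\theta\neq 0$, which is what we want anyway. So the real work is a pointwise lower bound on $G_{\mu_{p^*}}$.

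First I would use the atomless hypothesis (the measure has a density, so no cut-locus issues arise) together with formula~\eqref{eq.grad}/\eqref{eq:bar} and the fundamental theorem of calculus to write, for $0\le\theta<\pi$,
\begin{equation*}
G_{\mu_{p^*}}(\theta) = \int_0^\theta \Big(t - 2\pi\,\mu_{p^*}([-\pi,-\pi+t[)\Big)\,dt.
\end{equation*}
Now bound $\mu_{p^*}([-\pi,-\pi+t[)$ from above. Splitting the interval $[-\pi,-\pi+t[$ according to whether points lie in $B(p^*,\varphi)$ or outside it: on $[-\pi,\pi[$ the ball $B(p^*,\varphi)$ corresponds to $|\cdot|<\varphi$, and the portion of $[-\pi,-\pi+t[$ meeting $B(p^*,\varphi)^c$ has density bounded by $\tfrac{1-\alpha}{2\pi}$ thanks to $P(p^*,\alpha,\varphi)$. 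Since $[-\pi,-\pi+t[$ is a ``small'' arc near the cut locus $\tilde p^*$, for $t$ not too large it is entirely contained in $B(p^*,\varphi)^c$, giving $\mu_{p^*}([-\pi,-\pi+t[)\le \tfrac{1-\alpha}{2\pi}\,t$; but this only works while $-\pi+t\le -\varphi$, i.e. $t\le \pi-\varphi$. For $t>\pi-\varphi$ part of the arc enters $B(p^*,\varphi)$ and one only gets $\mu_{p^*}([-\pi,-\pi+t[)\le \tfrac{1-\alpha}{2\pi}(\pi-\varphi) + \big(t-(\pi-\varphi)\big)$, using that the total mass is $1$ and the excess arc has length $t-(\pi-\varphi)$ with density at most $\tfrac1{2\pi}$ (or more crudely mass $\le 1$).

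Plugging these bounds into the integral, I expect to get $G_{\mu_{p^*}}(\theta) \ge \int_0^\theta \big(t - (1-\alpha)t\big)\,dt = \tfrac{\alpha}{2}\theta^2$ on $[0,\pi-\varphi]$, which is strictly positive there; and on $[\pi-\varphi,\pi[$ one continues the integral with the weaker bound and checks that the accumulated positive surplus $\tfrac{\alpha}{2}(\pi-\varphi)^2$ is not eaten up — this is where the precise threshold $\varphi<\varphi_\alpha = \pi\sqrt\alpha/(1+\sqrt\alpha)$ must enter. Concretely, on $[\pi-\varphi,\pi[$ the integrand $t-2\pi\mu_{p^*}([-\pi,-\pi+t[)$ is at least $t - (1-\alpha)(\pi-\varphi) - 2\pi(t-(\pi-\varphi))$ (a linear, decreasing function of $t$), whose integral over $[\pi-\varphi,\theta]$ I would compute explicitly and add to $\tfrac\alpha2(\pi-\varphi)^2$; requiring the worst case $\theta\to\pi$ to stay positive should reduce exactly to $\varphi<\varphi_\alpha$. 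The symmetric estimate on $[-\pi,0[$ is identical by the same argument applied to $\mu_{p^*}([\pi+t,\pi[)$.

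The main obstacle is the second regime $t\in[\pi-\varphi,\pi)$: there the naive ``all mass outside the ball'' bound fails, and one must carefully balance the surplus $\tfrac{\alpha}{2}(\pi-\varphi)^2$ built up over $[0,\pi-\varphi]$ against the possible deficit incurred near $\theta=\pi$, which is precisely what pins down the constant $\varphi_\alpha$; getting the inequality tight rather than lossy (so that the stated threshold, not a smaller one, comes out) will require using $P(p^*,\alpha,\varphi)$ on the whole set $\{|\theta|\ge\varphi\}$ rather than just locally, and possibly Lemma~\ref{lemme.properties}\eqref{lemm.prp3} as a sanity check on $m(\mu_{p^*})$. A minor additional point is to confirm strictness: since $\mu$ has a density, $\mu_{p^*}([-\pi,-\pi+t[)<t/(2\pi)$ cannot hold with equality on a whole neighbourhood unless the density is pinned, so $G_{\mu_{p^*}}(\theta)>0$ strictly for $\theta\ne 0$, giving uniqueness of the argmin at $p^*$.
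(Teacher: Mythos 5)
Your overall strategy coincides with the paper's: reduce, via Lemma \ref{lemme:g}, to showing $G_{\mu_{p^*}}(\theta)=F_{\mu_{p^*}}(\theta)-F_{\mu_{p^*}}(0)>0$ for $\theta\neq0$, accumulate the surplus $G_{\mu_{p^*}}(\pi-\varphi)\geq\tfrac{\alpha}{2}(\pi-\varphi)^{2}$ on the region where $P(p^*,\alpha,\varphi)$ bounds the density, then control the possible decrease on $[\pi-\varphi,\pi[$. The first regime is handled correctly and matches the paper (which gets the same surplus via the second derivative $1-2\pi f(-\pi+\theta)\geq\alpha$). The gap is in the second regime, and it is not a technicality: it is exactly the step that produces the constant $\varphi_\alpha$. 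For $t\in]\pi-\varphi,\pi[$ the arc $[-\pi,-\pi+t[$ penetrates the ball $B(p^*,\varphi)$, where $P(p^*,\alpha,\varphi)$ imposes \emph{no} bound on the density; on the contrary, since at most $1-\alpha$ of the mass can lie outside the ball, the density inside typically far exceeds $\tfrac{1}{2\pi}$. So your first proposed bound on the excess mass (density at most $\tfrac{1}{2\pi}$ on the portion of the arc inside the ball) is false --- and if it were true the integrand $t-2\pi\mu_{p^*}([-\pi,-\pi+t[)$ would stay nonnegative on all of $[\pi-\varphi,\pi[$ and the proposition would hold with no restriction on $\varphi$, which should have been a warning sign. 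Your fallback (excess mass $\leq1$) gives the deficit $\int_{\pi-\varphi}^{\pi}(t-2\pi)\,dt=-\pi\varphi-\tfrac{\varphi^{2}}{2}$, and the resulting condition $\alpha(\pi-\varphi)^{2}>2\pi\varphi+\varphi^{2}$ is strictly stronger than the stated one: for $\alpha=1$ it only covers $\varphi<\tfrac{\pi}{4}$ instead of the claimed $\varphi<\tfrac{\pi}{2}$.

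The paper closes precisely this step by bounding $\int_{\pi-\varphi}^{\theta}\bigl(t-2\pi\mu_{p^*}([-\pi,-\pi+t[)\bigr)\,dt$ from below by the same quantity computed for the extremal measure $\nu=\tfrac{1}{2}(\delta_{-\varphi}+\delta_{\varphi})$, which gives a deficit of only $-\tfrac{\varphi^{2}}{2}$ and hence inequality \eqref{ineq.1}, $G_{\mu_{p^*}}(\theta)\geq\tfrac{1}{2}\bigl(\alpha(\pi-\varphi)^{2}-\varphi^{2}\bigr)$; positivity of this right-hand side is exactly $\varphi<\varphi_\alpha$. The ingredient you gesture at but never actually use --- that $p^*$ is a critical point, so $m(\mu_{p^*})=0$ and the mass concentrated near the boundary of the ball must be balanced between $-\varphi$ and $+\varphi$ rather than piled up entirely just inside $-\varphi$ --- is what limits $\mu_{p^*}([-\pi,-\pi+t[)$ for $t<\pi$ and drives the improvement from $-\pi\varphi-\tfrac{\varphi^{2}}{2}$ to $-\tfrac{\varphi^{2}}{2}$. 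Without some such balancing argument in the regime $t\in[\pi-\varphi,\pi[$, your proof cannot reach the stated threshold $\varphi_\alpha$.
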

%Before the proof we make two remarks about Proposition \ref{prop:suf}. 
For all $\alpha\in]0,1]$, we have $\varphi_0 = 0 \leq \varphi_\alpha <  \frac{\pi}{2}=\varphi_1 $.  Note that if $\alpha=1$ then $\mu$ has its support included in the ball $B(p^*,\frac{\pi}{2}) =\{p\in\S,d_{\S^1}(p^*,p)\leq\tfrac{\pi}{2}\}$ and when $\alpha<1$, the support of $\mu$ can be the entire circle $\S^1$. %Now if $\alpha$  is small, the density $f_{p^*}$ is allowed to approach 'from below' the uniform density on $[-\pi,\pi[ \setminus[-\varphi_\alpha,\varphi_\alpha]$ and $f_{p^*}$ can be greater than $\frac{1}{2\pi}$ only in the 'small' interval  $[-\varphi_\alpha,\varphi_\alpha]$. 

\begin{proof}
%First we show that there is a global argmin $p^*$of $F_\mu$ which satisfy  $|\theta_{p^*}^{\hat p}| \leq \varphi(\alpha)$. To this end, we show is a critical point of $F_\mu$ and we have 
%let $\theta_{\hat p} = \theta^{p^*}_{\hat p}$ and suppose without lost of generality that $-\varphi(\alpha)<\theta_{\hat p} \leq 0$. %As the support of $\mu_{p^*}$ is included in $[\theta_{\hat p}-\frac{\pi}{2},\theta_{\hat p} +\frac{\pi}{2}]$, we have,
Let $G_{\mu_{p^*}}(\theta) = F_{\mu_{p^*}}(\theta) - F_{\mu_{p^*}}(0)$ for all $\theta\in[-\pi,\pi[$. As the measure $\mu$ admits a density $f$, $G_{\mu_{p^*}}$ is twice differentiable and equation \eqref{eq.grad} implies that   
%$p^*$ is a critical point of $F_\mu$ since $m(\mu_{p^*})=0$, see Corollary \ref{lemme:c}. Moreover  Lemma \ref{lemme:g} ensures that the second derivative of $F_{\mu_{p^*}}$ is equal to 
$\frac{d^2}{d\theta^2}G_{\mu_{p^*}}(\theta) =   1 - 2\pi f_{}(-\pi + \theta), \text{ if } 0\leq\theta<\pi$,  and  $\frac{d^2}{d\theta^2}G_{\mu_{p^*}}(\theta) =1 - 2\pi f_{}(\pi+\theta), \text{ if }-\pi\leq\theta<0$. Since $f\in P(p^*,\alpha,\varphi)$, the function $G_{\mu_{p^*}}$ is convex on $[-\pi+\varphi,\pi-\varphi]$ and has a unique minimum at 0. Let us show that 0 is the only argmin of $F_{\mu_{p^*}}$ on $[-\pi,\pi[$.  %$ \theta \in]\theta_{\hat p}-\frac{\pi}{2},\theta_{\hat p}+\frac{\pi}{2}[$. Let us consider the case where
If $\theta \in[\pi-\varphi,\pi[$, we have thanks to Lemma \ref{lemme:g}
\[
G_{\mu_{p^*}}(\theta) =  G_{\mu_{p^*}}(\pi-\varphi) + \int_{\pi-\varphi}^{\theta} t -2\pi\mu_{p^*}([-\pi,-\pi+t[)dt.
\]
Since $f\in P(p^*,\alpha,\varphi)$ we have $G_{\mu_{p^*}}(\pi-\varphi) \geq\frac{\alpha}{2} (\pi-\varphi)^{2}$ and the second term is bounded from  below by $\int_{\pi-\varphi}^{\pi} t -2\pi\nu([-\pi,-\pi+t[)dt$ where $\nu = \tfrac{1}{2}(\delta_{-\varphi} + \delta_{\varphi})$. It yields for $\theta \in[\pi-\varphi,\pi[$,
\begin{align}
G_{\mu_{p^*}}(\theta) %&\geq \frac{\alpha}{2} (\pi-\varphi)^{2} +  \int_{\pi-\varphi}^{\theta} (t -\pi) dt  \nonumber \\
%& = \frac{1}{2} (  (\alpha -1)(\pi-\varphi)^{2} +2\pi (\pi-\varphi) +\theta^2 - 2\pi\theta) \nonumber \\
&\geq\frac{1}{2} (  \alpha (\pi-\varphi)^{2} - \varphi^{2}).\label{ineq.1}
\end{align}
The right hand side of \eqref{ineq.1} is strictly positive if $\varphi<\varphi_{\alpha}= \pi\frac{\sqrt{\alpha}}{1 + \sqrt{\alpha}}$. Similarly, the same condition implies $G_{\mu_{p^*}}(\theta) > 0$ for $\theta\in[-\pi,-\pi+\varphi[$. %The result now follows by Theorem \ref{th:crit}.
%which is strictly positive for $\theta\in ]-\pi,\frac{\pi}{2} [ $ but vanishes in $-\pi$.  Indeed, the lower bound on $G_\mu$ correspond to the case where $\mu_{p^*} = \frac{1}{2} (\delta_{-\frac{\pi}{2}} + \delta_{\frac{\pi}{2}})$. Now assume that $ \mu(\{ -\frac{\pi}{2},\frac{\pi}{2} \}) <1 $. It implies that  $\mu(\{ -\frac{\pi}{2} \})< \frac{1}{2}$ and $\mu(\{\frac{\pi}{2}\})< \frac{1}{2}$.
\end{proof}

%To use the preceding Proposition, we need to know a critical point $p^*$ of $F_\mu$, that is we need to localize the Fréchet mean. This condition is not very realistic in practice and to relax it, one needs more concentration than in the preceding Proposition. % As above, $f$ is the density of $\mu$ and $f_p = f\circ e_p$, $p\in\S^1$. 
%Indeed, if there is a $p\in\S^1$  with $f_p$ sufficiently concentrated, then there is a critical point in the neighborhood of $p$. Moreover, this critical  point is the Fréchet mean of $\mu$.
We are now able to define a functional class of densities that admit a well defined Fr\'echet mean \emph{without} restriction on the support of the measure. 

\begin{theorem}\label{theo.suf}
%Let $\delta\in]0,\tfrac{1}{2}[$ and $\alpha_\delta$ be the square of the root of $ (5 - 6\delta + \delta^2 )X^{3}+(1- \delta^2)X^{2}-(2\delta +1) X- 1$ that lies in $]0,1]$. If $\mu$ is a probability measure with density $f\in P(\alpha,\varphi)$  (see Definition \ref{def.P}) with $\alpha_\delta\leq\alpha\leq 1$ and $\varphi \leq \delta\varphi_\alpha = \delta \pi\frac{\sqrt{\alpha}}{1+\sqrt{\alpha}}$ then  $\mu$ admits a well defined Fréchet mean.
Let $0<\delta<\tfrac{1}{2}$ be a parameter of concentration and  $\mu$ a probability measure with density $f\in P(\alpha,\varphi)$  (see Definition \ref{def.P}) with 
$$
\alpha_\delta\leq\alpha\leq 1 \qquad \text{ and } \qquad \varphi \leq \delta\varphi_\alpha
$$ 
where $\alpha_\delta$ be the square of the root of $ (5 - 6\delta + \delta^2 )X^{3}+(1- \delta^2)X^{2}-(2\delta +1) X- 1$ that lies in $]0,1]$ and $\varphi_\alpha= \pi\frac{\sqrt{\alpha}}{1+\sqrt{\alpha}}$. Then $\mu$ admits a well defined Fr\'echet means.
\end{theorem}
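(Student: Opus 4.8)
The plan is to reduce everything to Proposition~\ref{prop:suf}, which already proves the claim whenever the concentration point is a critical point of $F_\mu$; the only gap is that the point $p$ produced by the hypothesis $f\in P(\alpha,\varphi)$ need not be critical. So I would proceed in three stages: (i) locate an exponential barycenter $p^*$ of $F_\mu$ close to $p$, with a quantitative bound on $d_{\S^1}(p,p^*)$; (ii) transport the concentration property from $p$ to $p^*$ using Lemma~\ref{lemme.properties}, obtaining $P(p^*,\alpha,\varphi^*)$ with $\varphi^*$ only slightly bigger than $\varphi$; (iii) check that $\varphi^*$ stays below the threshold $\varphi_\alpha$ of Proposition~\ref{prop:suf} — and see that this is exactly what the conditions $\alpha\ge\alpha_\delta$ and $\varphi\le\delta\varphi_\alpha$ buy.

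For stage (i) I would work in the normal chart centered at $p$. For $0\le\theta\le\pi-\varphi$ every point of the arc $[-\pi,-\pi+\theta[$ is at distance at least $\varphi$ from $p$, so \eqref{eq.crit} gives $\mu_p([-\pi,-\pi+\theta[)\le\tfrac{1-\alpha}{2\pi}\theta$; substituting this into the derivative formula~\eqref{eq.grad} yields $\frac{d}{d\theta}F_{\mu_p}(\theta)\ge\alpha\theta-m(\mu_p)$ on $[0,\pi-\varphi]$ and, symmetrically, $\frac{d}{d\theta}F_{\mu_p}(\theta)\le\alpha\theta-m(\mu_p)$ on $[-(\pi-\varphi),0]$. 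Since $\mu$ has a density, this derivative is continuous and equals $-m(\mu_p)$ at $0$ by~\eqref{eq:bar}, so the intermediate value theorem produces a zero $\theta^*$ with $|\theta^*|\le|m(\mu_p)|/\alpha$; then $p^*:=e_p(\theta^*)$ is a critical point with $d_{\S^1}(p,p^*)\le|m(\mu_p)|/\alpha$. Combining with the third item of Lemma~\ref{lemme.properties}, $|m(\mu_p)|\le\varphi+\tfrac{1-\alpha}{4\pi}(\pi-\varphi)^2$, this gives an explicit bound on $d_{\S^1}(p,p^*)$.

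For stage (ii), since $\varphi\le\delta\varphi_\alpha<\varphi_\alpha\le\tfrac\pi2$, the second item of Lemma~\ref{lemme.properties} applies and shows that $f$ satisfies $P(p^*,\alpha,\varphi^*)$ with
\[
\varphi^*=\varphi+d_{\S^1}(p,p^*)\le\varphi+\tfrac1\alpha\bigl(\varphi+\tfrac{1-\alpha}{4\pi}(\pi-\varphi)^2\bigr).
\]
In stage (iii) the specific constants appear: the right-hand side above is an increasing function of $\varphi$ on $[0,\delta\varphi_\alpha]$ (it is convex with positive derivative at $0$), so it suffices to verify $\varphi^*<\varphi_\alpha$ at $\varphi=\delta\varphi_\alpha$. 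Substituting $\varphi_\alpha=\pi\frac{\sqrt\alpha}{1+\sqrt\alpha}$, writing $\beta=\sqrt\alpha$ and clearing denominators, this inequality becomes equivalent to
\[
(5-6\delta+\delta^2)\beta^3+(1-\delta^2)\beta^2-(2\delta+1)\beta-1>0 .
\]
This cubic equals $-1$ at $\beta=0$ and $4(1-2\delta)>0$ at $\beta=1$, and one checks it changes sign only once on $]0,1[$, so it is positive precisely when $\beta$ exceeds its root in $]0,1]$, i.e. when $\alpha\ge\alpha_\delta$. Once $0<\varphi^*<\varphi_\alpha$ is established, Proposition~\ref{prop:suf} applied at $p^*$ gives that $\mu$ has a well-defined Fréchet mean (located at $p^*$), which finishes the proof.

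The main obstacle I anticipate is the bookkeeping in stage (iii): the estimate for $\varphi^*$ coming out of (i)--(ii) must be simplified to \emph{exactly} the advertised cubic in $\sqrt\alpha$, which requires handling the rational expression $\pi\frac{\sqrt\alpha}{1+\sqrt\alpha}$ and the $(\pi-\delta\varphi_\alpha)^2$ term carefully, and verifying the monotonicity in $\varphi$ that justifies evaluating at the extreme case. A secondary point is checking the side conditions used along the way — chiefly $|m(\mu_p)|/\alpha\le\pi-\varphi$, so that the intermediate value argument of stage (i) is legitimate, and $d_{\S^1}(p,p^*)<\pi-\varphi$ for the application of Lemma~\ref{lemme.properties} — but these follow from the same estimates together with $\delta<\tfrac12$.
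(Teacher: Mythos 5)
Your proposal is correct and follows essentially the same route as the paper: locate a critical point $p^*$ near the concentration point $p$ via the intermediate value theorem applied to $\frac{d}{d\theta}F_{\mu_p}$ (using the bound on $\mu_p$ near the cut locus from $P(p,\alpha,\varphi)$ and the bound on $|m(\mu_p)|$ from Lemma \ref{lemme.properties}), transport the property to $P(p^*,\alpha,\varphi_\alpha)$ by item 2 of that lemma, and conclude with Proposition \ref{prop:suf}; your condition $\varphi+|m(\mu_p)|/\alpha<\varphi_\alpha$ is algebraically the same as the paper's requirement that $\frac{d}{d\theta}F_{\mu_p}((1-\delta)\varphi_\alpha)\ge 0$ and yields exactly the advertised cubic in $\sqrt{\alpha}$.
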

Firstly, remark that there is no need to know a critical point \textit{a priori}. Secondly, the parameter $\delta$ controls the concentration of $f$ via the inequality $\alpha_\delta<\alpha$ and  $\varphi\leq \delta\varphi_\alpha$. There is a tradeoff between $\alpha$ and the possible value of $\varphi$: the smaller $\alpha$ is (i.e the less $f$ is concentrated) the smaller $\varphi$ must be (i.e we need to control the value of the density on a bigger interval). %As a typical example, take $\delta = \frac{1}{3}$. In this case $\alpha_\delta = \alpha_{\frac{1}{3}} <0.69$ and $\delta\varphi_{\alpha_\delta} = \frac{1}{3}\varphi_{\alpha_\frac{1}{3}} %\frac{\pi\sqrt{\alpha_{\frac{1}{3}}}}{3(1+\sqrt{\alpha_{\frac{1}{3}}})} \leq 0.47$. If $f\in P(\alpha,\varphi)$ with $0.69<\alpha\leq 1$ and  $\varphi \leq 0.48$ then there is a well defined Fréchet mean. 
In Tabular  \ref{tab.1} we give examples of numerical values. Note that the column corresponding to $\delta =0 $  is given as a reference only as the set $P(\alpha_\delta,\delta\varphi_\alpha )$ is empty for this values of $\delta$.

\bigskip

\begin{table}[ht]
\begin{center} 
\begin{tabular}{cccccc}%{c|c|c|c|c|c|}
\hline  $\delta =$ & 0 & $\frac{1}{10}$ & $\frac{1}{5}$ & $\frac{1}{3}$ & $\frac{1}{2}$ \\ 
\hline  $\alpha_\delta \leq$ & 0.39 & 0.46 & 0.54 & 0.69 & \phantom{0}1\phantom{0} \\ 
  $\delta\varphi_{\alpha_\delta }\geq$ & 0 &  0.12 & 0.26 & 0.47 & $\frac{\pi}{4}$ \\
\hline 
\end{tabular} 
\caption{Some values of $\alpha_\delta  $ and $\delta\varphi_{\alpha_\delta } $ depending on $\delta\in]0,\tfrac{1}{2}[ $.}\label{tab.1}
\end{center}
\end{table}
\begin{proof}
	We show that under the hypothesis of the Theorem \ref{theo.suf}, there is a critical point $p^*$ of $F_\mu$ satisfying $d_{\S^1}( p,p^*) \leq (1-\delta)\varphi_\alpha$ where $p\in\S^1$ is a point satisfiying $f\in P(p,\alpha,\varphi)$. Thence, by Lemma \ref{lemme.properties}, $f$ belongs to $P(p^*,\alpha, \delta\varphi_\alpha + (1-\delta)\varphi_\alpha) =P(p^*,\alpha, \varphi_\alpha)$ and  Proposition \ref{prop:suf} will ensure that $p^*$ is the Fréchet mean of $\mu$. 

	In the rest of the proof, we show that there is a $p^*\in\S^1$ such that $\frac{d}{d\theta}F_{\mu_p}(\theta^p_{p^*}) = 0$ with $d_{\S^1}( p,p^*) \leq (1-\delta)\varphi_\alpha$. To this end, suppose that $m(\mu_p) \geq 0$ (the case $m(\mu_p)<0$ is similar). Equation \eqref{eq:bar} implies that
%\begin{equation}\label{eq.pos}
$	\frac{d}{d\theta}F_{\mu_p} (0) = -m(\mu_p) \leq 0$, 
%\end{equation}
and let us check that, under the hypothesis of Theorem \ref{theo.suf}, we have $ \frac{d}{d\theta}F_{\mu_p}\left((1-\delta)\varphi_\alpha\right)\leq 0$. Since $\tfrac{d}{d\theta}F_{\mu_p}$ is a continuous function, the intermediate value Theorem will ensure the existence of a critical point $p^*$ such that $ |\theta_{p^*}^{p}|\leq (1-\delta)\varphi_{\alpha}$. Equation \eqref{eq.grad} gives  
$$
\frac{d}{d\theta}F_{\mu_p}\left((1-\delta)\varphi_\alpha\right) =(1-\delta)\varphi_\alpha - 2\pi \mu_{p}\left(\left[-\pi,-\pi + (1-\delta)\varphi_\alpha\right[\right) - m(\mu_p). 
$$
We have $- 2\pi \mu_{p}([-\pi,-\pi +(1-\delta)\varphi_\alpha[) \geq (\alpha -1 )(1-\delta)\varphi_\alpha$ since $f\in P(p,\alpha,\delta\varphi_\alpha)$  and $ \abs{-\pi +(1-\delta)\varphi_\alpha}\geq \delta\varphi_\alpha $. Moreover, $ - m(\mu)\geq  \delta\varphi_\alpha-\frac{1-\alpha}{4\pi}\left(\pi -\delta\varphi_\alpha \right)^2$ by Lemma \ref{lemme.properties} Statement \ref{lemm.prp3}. It gives,
\begin{align*}
\frac{d}{d\theta}F_{\mu_p}\left((1-\delta)\varphi_\alpha\right) & \geq %(1-\delta)\alpha \varphi_\alpha - \delta\varphi_\alpha-\frac{1-\alpha}{4\pi}\left(\pi -\delta\varphi_\alpha \right)^2\\ %= \pi\frac {   28{\alpha}^{2}+36\alpha\sqrt{\alpha}-7 \alpha-24\sqrt{\alpha}-9 }{ 36\left( 1+\sqrt {\alpha} \right) ^{2}} 
%&= 
\pi\frac{(5-6\delta + \delta^2)\alpha\sqrt{\alpha} + (1-\delta^2)\alpha-(2\delta +1)\sqrt{\alpha} -1 }{ 1+\sqrt {\alpha}}
%\frac{d}{d\theta}F_{\mu_p}\left(2\frac{\varphi_\alpha}{3}\right) & \geq 2\alpha \frac{\varphi_\alpha}{3} - (1-\alpha)\left(\pi -  \frac{\varphi_\alpha}{3}\right) -  \frac{\varphi_\alpha}{3}\\
%& = \frac{\alpha\varphi_\alpha}{3} + \pi(\alpha -1) = \frac{\alpha\pi}{3}\frac{\sqrt{\alpha} }{1+\sqrt{\alpha}} + \pi(\alpha -1)
\end{align*}
This quantity is positive as soon as $1\geq \alpha > \alpha_\delta$, where $\sqrt{\alpha_\delta}$ is  the root of the polynomial $X\longmapsto(5-6\delta + \delta^2)X^3 + (1-\delta^2)X^2-(2\delta +1)X -1$ that lies in $]0,1]$.  It is easy to see that $\alpha_{\frac{1}{2 }} = 1$ and numerical experiments show that the (increasing) function $\delta \longmapsto\alpha_\delta$ takes its value in $]0.39,1[$ for $\delta\in]0,\tfrac{1}{2}[$. %Since the derivative of $F_\mu$ is continuous, the intermediate value Theorem ensures that there is a critical point such that $ |\theta_{p^*}^{p}|\leq (1-\delta)\varphi_{\alpha}$. This is what we need to complete the proof.
%Now by Lemma \ref{lemme.properties}, $f$ satisfies $P(p^*, \alpha,\frac{\varphi_\alpha}{3} +2\frac{\varphi_\alpha}{3}) = P(p^*, \alpha,\varphi_\alpha )$ and Proposition \ref{prop:suf} ensures that $p^*$ is precisely the Fréchet mean of $\mu$. 
\end{proof}

\section{Fréchet mean of an empirical measure} \label{part.emp}

\subsection{Existence}

Let $X_1,\ldots,X_n$ be independent and identically distributed random variables with value in $(\S^1,d_{\S^1})$ and of probability distribution $\mu$. The empirical measure is defined as usual by 
$$
\mu^n = \frac{1}{n} \sum_{i=1}^n \delta_{X_i}
$$
and we note $p^*_n$ the empirical Fréchet mean defined as the unique argmin of 
$
F_{\mu^n}(p)= \frac{1}{2n}\sum_{i=1}^n d^2_{\S^1}(p,X_i)$, $p\in\S^1.
$ 
%If the argmin of $F_{\mu^n}$ is unique it is called the empirical Fréchet mean, and will be denoted by $p^*_n$. 
In \cite{MR0501230} a strong law of large number is given for the empirical Fréchet mean in a semi metric space which is the case of $(\S^1,d_{\S^1})$. %If $\mu$ admits a well defined Fréchet mean, any measurable choice in the  empirical Fréchet mean set of $\mu^n$ is a  consistent estimator of $p^*$. 
In particular, if $p^*_n$ exists for each $n\in\NN$, the empirical Fréchet mean is a consistent estimator of the Fréchet mean. Indeed, the empirical Fréchet mean is well defined almost surely for a wide class of probability measures as the following fact from \cite{Bhatt} Remark 2.6 shows,

%, is proved for general compact metric space.    % In this cases the empirical Fréchet mean will 
%The following result implies that if the probability measure is $\mu$
%The functional $F_{\mu_n}$ is continuous and piecewise quadratic with at most $n$ cusp points (corresponding to the cut locii of each mass point). Then, there is at most $n$ points where the derivative left $F'_{\mu^n}$ vanished. Denote  by $C$ the set of zero of $F'_{\mu^n}$. By Lemma \ref{lemme:c} they correspond to point  satisfying $m(\mu_p)=0$.
\begin{lemma}\label{lemme.EFM}
Let $\mu$ be a non atomic  probability measure on the circle, i.e satisfying $\mu(\{p\}) =0$ for all $p\in\S^1$. Then for all $n\in\NN$ the empirical  Fréchet mean exists almost surely.
\end{lemma}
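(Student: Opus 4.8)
The plan is to fix $n\in\NN$ and to show that the set $D_n\subset(\S^1)^n$ of configurations $(x_1,\dots,x_n)$ for which $F_{\mu^n}$ has more than one argmin is $\mu^{\otimes n}$-negligible; since the law of $(X_1,\dots,X_n)$ is $\mu^{\otimes n}$, this gives that $p^*_n$ exists almost surely for each $n$ (and, intersecting the countably many full-probability events, for all $n$ simultaneously). The subtle point is that $\mu$ is only assumed \emph{non-atomic}, not absolutely continuous, so it does \emph{not} suffice to check that $D_n$ has Lebesgue measure zero. Instead I would show that, read through a fixed normal chart $e_{p_0}$, the set $D_n$ sits inside a finite union of proper affine hyperplanes of $[-\pi,\pi[^n$; each such hyperplane is $\mu^{\otimes n}$-null because, by Fubini, it reduces to $\mu$ of a single point, which vanishes by non-atomicity.

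Fix $p_0$ and write $\theta_i=\theta^{p_0}_{x_i}\in[-\pi,\pi[$. Assume $F_{\mu^n}$ has two distinct global minimizers $p^*\neq q^*$. First, Corollary \ref{prop.min} together with \eqref{eq:bar} forces $m(\mu^n_{p^*})=m(\mu^n_{q^*})=0$, i.e. both are exponential barycenters. Writing $a_i=\theta^{p^*}_{x_i}$, $\gamma=\theta^{p_0}_{p^*}$ and $\beta=\theta^{p^*}_{q^*}\in[-\pi,\pi[\setminus\{0\}$, the transition rules between normal coordinate systems give $a_i=\theta_i-\gamma+2\pi l_i$ and $\theta^{q^*}_{x_i}=a_i-\beta+2\pi k_i$ for some $k_i,l_i\in\{-1,0,1\}$, while the two barycenter identities $\sum_ia_i=\sum_i\theta^{q^*}_{x_i}=0$ pin down $\gamma=\tfrac1n\big(\sum_i\theta_i+2\pi L\big)$ and $\beta=\tfrac{2\pi}{n}K$, with $K=\sum_ik_i$ and $L=\sum_il_i$. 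The crux is then to equate the two minimal values: since $d_{\R\slash 2\pi\Z}(\cdot,0)=|\cdot|$ on $[-\pi,\pi[$, one has $F_{\mu^n}(p^*)=\tfrac1{2n}\sum_ia_i^2$ and $F_{\mu^n}(q^*)=\tfrac1{2n}\sum_i(\theta^{q^*}_{x_i})^2$, so expanding $\sum_i(a_i-\beta+2\pi k_i)^2=\sum_ia_i^2$ and inserting the expressions for $\gamma$ and $\beta$ makes all the quadratic terms cancel, leaving the single linear relation
\begin{equation}\label{eq.plan}
\sum_{i=1}^n\theta_i\Big(k_i-\frac{K}{n}\Big)=\frac{2\pi KL}{n}-\pi\Big(\sum_ik_i^2-\frac{K^2}{n}\Big)-2\pi\sum_il_ik_i,
\end{equation}
whose right-hand side depends only on $(k,l)\in\{-1,0,1\}^n\times\{-1,0,1\}^n$.

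To conclude, I would note that the coefficient vector $\big(k_i-\tfrac{K}{n}\big)_i$ in \eqref{eq.plan} is never the zero vector: if all $k_i$ were equal, then $\beta=\tfrac{2\pi}{n}K\in\{-2\pi,0,2\pi\}$, which is impossible for $\beta\in[-\pi,\pi[\setminus\{0\}$. Hence, in the coordinates $(\theta_1,\dots,\theta_n)$, the set $D_n$ is contained in the union, over the finitely many pairs $(k,l)$, of the proper affine hyperplanes defined by \eqref{eq.plan}; fixing an index $i_0$ with $k_{i_0}-K/n\neq 0$, such a hyperplane writes $\theta_{i_0}$ as a fixed affine function of $(\theta_i)_{i\neq i_0}$, so by Fubini together with $\mu(\{t\})=0$ for all $t$ it is $\mu^{\otimes n}$-null, and a finite union of null sets is null. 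I expect the only genuinely delicate parts to be the chart bookkeeping — keeping $k_i,l_i\in\{-1,0,1\}$ and using the two barycenter constraints correctly — together with checking that the coefficient vector does not vanish; the cancellation producing \eqref{eq.plan} and the concluding Fubini argument are then routine.
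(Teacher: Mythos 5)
Your argument is correct, and it is worth noting that the paper does not actually prove this lemma: it is stated as a fact imported from Bhattacharya--Patrangenaru (Remark 2.6 of \cite{Bhatt}), so there is no in-paper proof to compare against. Your proposal supplies a complete, self-contained argument that fits naturally with the paper's machinery: you use Corollary \ref{prop.min} and \eqref{eq:bar} to force any two distinct global minimizers $p^*\neq q^*$ of $F_{\mu^n}$ to be exponential barycenters with no data at their cut loci, which pins down $\gamma$ and $\beta$ up to the finitely many winding choices $(k,l)\in\{-1,0,1\}^{2n}$, and then the tie condition $F_{\mu^n}(p^*)=F_{\mu^n}(q^*)$ collapses (after the quadratic terms cancel against the barycenter constraints) to a single affine relation in $(\theta_1,\dots,\theta_n)$ whose coefficient vector $(k_i-K/n)_i$ vanishes only when all $k_i$ coincide, i.e.\ only when $\beta\in\{-2\pi,0,2\pi\}$, which is excluded. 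I checked the expansion and your right-hand side $\frac{2\pi KL}{n}-\pi(\sum_i k_i^2-K^2/n)-2\pi\sum_i l_ik_i$ is exact. The concluding step is also handled correctly: you rightly observe that non-atomicity (rather than absolute continuity) is all that is needed, since a proper affine hyperplane is $\mu^{\otimes n}$-null by Fubini once one coordinate is an affine function of the others and $\mu$ charges no singleton. Two very minor points you could make explicit: for $n=1$ the bad set is empty (your dichotomy already covers this, since a single $k_1$ is trivially constant), and the event of non-uniqueness need only be \emph{contained} in the finite union of null hyperplanes, so no measurability discussion of $D_n$ itself is required.
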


%Lemma \ref{lemme.EFM} shows that 
Hence, the empirical Fréchet mean $p_n^*$ of a probability measure $\mu$ can be computed even if $\mu$ does not possess a well defined Fréchet mean. %In this case, the sequence of empirical Fréchet mean do not converge almost surely. 

\subsection{Consistency}% of the empirical Fréchet mean}

If the Fréchet mean $p^*$ of $\mu$ is well defined, we study the rate of convergence of the empirical Fréchet mean $p^*_n$ to $p^*$. %To this end, we derive a concentration inequality that shows the consistency of the empirical Fréchet mean. 

\begin{proposition}\label{prop.ee}
Let $\mu$ be a measure with density $f:\S^1 \longrightarrow \R$ that admits a well defined Fréchet mean $p^*$.  Then, there exists a strictly increasing function $\rho : ]0,\pi[\longrightarrow ]0,+\infty[$ such that for all $p\in\S^1$, $p\neq p^*$ 
$$
F_{\mu}(p) - F_{\mu} (p^*) \geq \rho(d_{\S^1}(p,p^* )).
$$
If $p^*_n$ denotes the empirical Fréchet mean, we have  for all  %satisfying Assumption \ref{hyp.1},  
 $x>0$
 \begin{equation}\label{eq.ee}
	 \P\left ( \rho( d_{\S^1}(p^*_n, p^*) ) \geq C(s) \sqrt{\frac{x}{n}} \right ) \leq 2e^{-x}.
	 %\P\left ( (\theta_{p^*_n}^{p^*})^2 \geq \max \{ \frac{\pi^2}{\gamma(\varphi)}, \frac{4\pi}{\alpha}\} \left( 4\pi\sqrt{\frac{x}{n}} +2 \frac{\Var(\mu_{p^*})}{n}\right ) \right ) \leq 2e^{-x}
	 \end{equation}
where $s = \max\{ \abs{ x-y},\ x,y\in \supp(\mu)\}$ and $C(s)=(4\pi^2+ 4\pi^2 s+2s)\leq 4\pi(2\pi^2+\pi+1)$.
%where $\alpha\in[0,1[$ is the constant of  Assumption \ref{hyp.1} and $\gamma(\varphi) = \min_{\theta\in]-\pi,-\varphi[ \cup  ]\varphi,\pi[ } F_{\mu_{p^*}}(\theta)$.
 % with density satisfying the property $P(\alpha,\varphi)$ for some $\alpha \in (0,1)$ and $\varphi\in(0,2\pi)$. 
\end{proposition}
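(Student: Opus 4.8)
The plan is to split Proposition~\ref{prop.ee} into two independent parts: (i) the deterministic coercivity estimate producing the modulus $\rho$, and (ii) the concentration inequality \eqref{eq.ee}, which will follow by combining (i) with a bounded-differences argument applied to the map $(x_1,\dots,x_n)\mapsto \inf_p F_{\mu^n}(p)$ and a stability estimate relating $F_{\mu^n}$ to $F_\mu$ uniformly on $\S^1$.

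\emph{Step 1: construction of $\rho$.} Since $\mu$ has a density and admits a well-defined Fréchet mean $p^*$, Lemma~\ref{lemme:g} gives, in normal coordinates centred at $p^*$, an explicit formula for $G_{\mu_{p^*}}(\theta)=F_{\mu_{p^*}}(\theta)-F_{\mu_{p^*}}(0)$ as an integral; by Theorem~\ref{th:crit} the integrand is such that $G_{\mu_{p^*}}(\theta)>0$ for all $\theta\neq 0$. I would define
$$
\rho(r) = \inf\{\, G_{\mu_{p^*}}(\theta) \ :\ |\theta| \geq r\,\}, \qquad r\in{]0,\pi[},
$$
which is strictly positive because $G_{\mu_{p^*}}$ is continuous on the compact arc $\{|\theta|\geq r\}$ (using the left-continuous closed-circle convention, so the infimum is attained) and vanishes only at $0$. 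It is nondecreasing by construction; strict monotonicity follows from the fact that $G_{\mu_{p^*}}$ has no interior local minimum away from $0$ (any such minimum would be a critical point with $\mu$-measure-zero cut locus at which a one-sided argument, as in Corollary~\ref{prop.min} together with the strict positivity of $G_{\mu_{p^*}}$, forces the value to strictly increase on each side), so one can always replace $\rho$ by a strictly increasing minorant with the same properties if needed. Then $F_\mu(p)-F_\mu(p^*) = G_{\mu_{p^*}}(\theta_p^{p^*}) \geq \rho(d_{\S^1}(p,p^*))$ for $p\neq p^*$.

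\emph{Step 2: from coercivity to the tail bound.} Write $V_n := F_{\mu^n}(p^*_n) - F_{\mu}(p^*)$. On one hand, since $p^*_n$ minimises $F_{\mu^n}$, $F_{\mu^n}(p^*_n)\leq F_{\mu^n}(p^*)$, so
$$
\rho\big(d_{\S^1}(p^*_n,p^*)\big) \leq F_\mu(p^*_n) - F_\mu(p^*) \leq \big(F_\mu(p^*_n)-F_{\mu^n}(p^*_n)\big) + \big(F_{\mu^n}(p^*)-F_\mu(p^*)\big) \leq 2\,\sup_{p\in\S^1}\big|F_{\mu^n}(p)-F_\mu(p)\big|.
$$
So it suffices to show $\P\big(\sup_{p}|F_{\mu^n}(p)-F_\mu(p)| \geq \tfrac{1}{2}C(s)\sqrt{x/n}\big)\leq 2e^{-x}$. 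I would get this from McDiarmid's bounded-differences inequality applied to $\Phi(X_1,\dots,X_n):=\sup_{p\in\S^1}|F_{\mu^n}(p)-F_\mu(p)|$: changing one $X_i$ changes $F_{\mu^n}(p)$ by at most $\tfrac{1}{2n}|d^2_{\S^1}(p,X_i)-d^2_{\S^1}(p,X_i')|$, and on $\supp(\mu)$ the distance $d_{\S^1}$ is controlled by the Euclidean diameter $s$ (using $d_{\S^1}\leq \tfrac{\pi}{2}\|x-p\|$ from \eqref{eq:dist0} and $\|x-p\|\leq s$), so the bounded-difference constant is of order $s/n$; this gives a subgaussian deviation of $\Phi$ above its mean with the stated constant $C(s)$ after bounding $\E[\Phi]$ by a symmetrisation/Rademacher argument exploiting that $\{d^2_{\S^1}(\cdot,p):p\in\S^1\}$ is a uniformly Lipschitz (hence low-complexity) family, which contributes the additive $4\pi^2$ and $4\pi^2 s$ terms; then invert to obtain \eqref{eq.ee}, and finally check $C(s)=4\pi^2+4\pi^2 s+2s\leq 4\pi(2\pi^2+\pi+1)$ using $s\leq 2$ (the Euclidean diameter of $\S^1$). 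Combining with $\rho\big(d_{\S^1}(p^*_n,p^*)\big)\leq 2\Phi$ yields \eqref{eq.ee}.

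\emph{Main obstacle.} The delicate point is Step~2: getting the \emph{explicit} constant $C(s)$ rather than an unspecified one. The bounded-differences constant and the expectation bound must each be tracked precisely, and one has to be careful that $F_{\mu^n}$ involves the empirical measure on both sides (the $\mu^n$ inside $F_{\mu^n}$ and the reference $\mu$), so the "function class" whose supremum is taken is $\{p\mapsto \tfrac12 d^2_{\S^1}(\cdot,p)\}$ with envelope $\tfrac12\pi^2$ and Lipschitz constant $\leq 2\pi$ in $p$ — these are exactly the ingredients that produce the numbers $4\pi^2$, $4\pi^2 s$, $2s$. Establishing that $\rho$ is genuinely \emph{strictly} increasing (not merely nondecreasing) is a minor secondary issue, handled by the one-sided monotonicity of $G_{\mu_{p^*}}$ away from its unique zero.
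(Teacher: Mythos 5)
Your Step 1 is essentially the paper's own construction. The paper's Lemma~\ref{lem.rho} sets $g(t)=\min\{f(-t),f(t),\min_{\abs{\tau}>t}f(\tau)\}$ --- exactly your tail infimum applied to $G_{\mu_{p^*}}$ --- and then averages, $\rho(\theta)=\frac{1}{\abs{\theta}}\int_0^\theta g(t)\,dt$, to obtain strict monotonicity. Your fallback (``replace $\rho$ by a strictly increasing minorant'') is therefore the right move, but the justification you give first --- that $G_{\mu_{p^*}}$ has no interior local minimum away from $0$ --- is false: the Fr\'echet functional typically has several local minima (this is the point of Figure~\ref{fig.1} and of the enumeration algorithm in Section~\ref{part.emp}), so the plain tail infimum genuinely can be locally constant and the averaging step is not optional.

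The genuine gap is in Step 2. The reduction $\rho(d_{\S^1}(p_n^*,p^*))\le 2\sup_p\abs{F_{\mu^n}(p)-F_\mu(p)}$ is the same as in the paper, but your control of the supremum via McDiarmid plus a symmetrisation bound on $\E[\Phi]$ is only asserted, and this is exactly where the explicit constant $C(s)=4\pi^2+4\pi^2 s+2s$ must be produced; a generic Rademacher-complexity bound for a uniformly Lipschitz class gives the right order $\sqrt{x/n}$ but there is no reason it reproduces these three summands, and you do not carry out the computation. The paper avoids empirical process machinery altogether: writing $F_{\mu_p^n}(\theta)-F_{\mu_p}(\theta)$ as its value at $0$ plus the integral of the difference of derivatives, and then substituting the explicit formula \eqref{eq.grad}, it bounds $2\sup_\theta\abs{F_{\mu_p^n}-F_{\mu_p}}$ by a sum of three elementary one-dimensional deviations: $4\pi^2\sup_\theta\abs{\mu_p([-\pi,\theta[)-\mu_p^n([-\pi,\theta[)}$ (controlled by Dvoretzky--Kiefer--Wolfowitz, giving the $4\pi^2$ term) and the first- and second-moment deviations (controlled by Hoeffding with range $s$, giving $4\pi^2 s+2s$). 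Note also that $s$ there is the diameter of the support in normal coordinates, so $s\le 2\pi$ --- consistent with $C(s)\le 4\pi(2\pi^2+\pi+1)$ --- not the Euclidean diameter $2$ you invoke. (A further blemish, common to both routes: combining two events each of probability at most $2e^{-x}$ by a union bound yields $4e^{-x}$, not the stated $2e^{-x}$.)
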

\noindent The function $\rho$ in the statement of the Proposition \ref{prop.ee} determines the rate of convergence of $p^*_n$ to $p^*$. %Indeed, the rate of convergence of $p^*_n$ to $p^*$ will depend on how fast $\rho^{-1}(t)$ is going to 0 when $t\to 0$. %For example, if $\rho = Id$ the tail of the law of $ $ will be sub Gaussian. 
 %For example, when the support of $\mu$ is strictly included in an hemisphere, then $\rho$ is a polynomial of order 2. %This is due to the fact that, in this case, the Fréchet functional $F_\mu$ is locally quadratic around its minimum. After the proof of Proposition \ref{prop.ee} we  show that u
 %Below, we will see that under the assumptions of Theorem \ref{theo.suf} we keep similar asymptotic rates of convergence but without this  bound on the support.  

\begin{proof}
The first claim about the lower bound $\rho$ is a direct consequence of the following Lemma:
\begin{lemma}\label{lem.rho}
	Let $f:[-\pi,\pi[ \longrightarrow \R^+$ be a continuous function on $[-\pi,\pi[$ (see Section \ref{part.ff})%and which satisfies $\lim_{\theta\to\pi^-} f(\theta) = \lim_{\theta\to -\pi^+} f(\theta) =f(-\pi)$
	. If $f$ vanishes at a unique point $\theta_0\in[-\pi,\pi[ $, then there exists a strictly increasing function $\rho:]0,\pi[ \longrightarrow ]0,+\infty[$ such that for all $\theta\in[-\pi,\pi[\setminus\{0\}$,  $$f(\theta) \geq \rho(d_{\S^1}(\theta_0, \theta)).$$
%Let $f:[-\pi,\pi[ \longrightarrow \R^+$ a function of class $\mathcal C^1([-\pi,\pi[)$ which vanishes at a unique point $\theta_0\in[-\pi,\pi[ $. Then, there exists a strictly increasing function $\rho:[0,\pi[ \longrightarrow \R^*$ such that for all $\theta\in[-\pi,\pi[$, $$f(\theta) \geq \rho(d_{\S^1}(\theta_0, \theta)).$$
\end{lemma}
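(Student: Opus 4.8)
The plan is to construct $\rho$ as a rescaled ``running infimum'' of $f$ around $\theta_0$. First I would use the hypothesis: ``$f$ continuous on $[-\pi,\pi[$'' in the paper's sense means that $f$ extends to a continuous $2\pi$-periodic function on $\R$ (equivalently, a continuous function on the compact circle $\S^1$), so that $\theta\mapsto d_{\S^1}(\theta_0,\theta)$ is continuous and, for each $r\in[0,\pi]$, the set $K_r:=\{\theta:\ d_{\S^1}(\theta_0,\theta)\geq r\}$ is compact (a closed arc, always nonempty since it contains the antipode of $\theta_0$) and $f$ attains its minimum on it. I then set $M(r):=\min_{K_r}f$ for $r\in[0,\pi]$.

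Next I would record the three features of $M$ that do the work. Since $\theta_0$ is the \emph{only} zero of $f$ and $\theta_0\notin K_r$ as soon as $r>0$, we get $M(r)>0$ for every $r\in\,]0,\pi]$. Since $K_{r_1}\supseteq K_{r_2}$ whenever $r_1\leq r_2$, minimising over nested sets gives $M(r_1)\leq M(r_2)$, so $M$ is non-decreasing on $[0,\pi]$. Finally, $\theta\in K_{d_{\S^1}(\theta_0,\theta)}$ for every $\theta\neq\theta_0$, whence $f(\theta)\geq M\big(d_{\S^1}(\theta_0,\theta)\big)$.

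It remains only to upgrade ``non-decreasing'' to ``strictly increasing''. I would simply put
$$
\rho(r):=\frac{r}{2\pi}\,M(r),\qquad r\in\,]0,\pi[\,.
$$
Being the product of the strictly increasing positive map $r\mapsto r/2\pi$ and the non-decreasing positive map $M$, $\rho$ is strictly increasing with values in $]0,+\infty[$; and since $r/2\pi<1$ on $]0,\pi[$ we have $\rho(r)\leq M(r)$, hence $f(\theta)\geq M\big(d_{\S^1}(\theta_0,\theta)\big)\geq\rho\big(d_{\S^1}(\theta_0,\theta)\big)$ for all $\theta\neq\theta_0$ — which is the claim (the ``$\setminus\{0\}$'' in the statement is the case $\theta_0=0$, the one used in Proposition \ref{prop.ee}).

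No step is really hard here; the only point that deserves attention is the passage from continuity on the half-open interval $[-\pi,\pi[$ (the paper's convention) to honest continuity/compactness on the circle, as this is what legitimises writing $\min$ instead of $\inf$ and guarantees $M(r)>0$, not merely $M(r)\geq0$, for $r>0$.
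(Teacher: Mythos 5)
Your proof is correct and follows essentially the same route as the paper: both construct the non-decreasing positive function $M(r)=\min_{d_{\S^1}(\theta_0,\theta)\geq r}f(\theta)$ (the paper's $g$) and then replace it by a strictly increasing minorant, the paper via the running average $\rho(\theta)=\frac{1}{\theta}\int_0^\theta g(t)\,dt$ and you via multiplication by $r/2\pi$. Your variant is equally valid and makes the verification of strict monotonicity slightly more transparent; your remark on periodic continuity matches the paper's opening reduction to $\theta_0=0$.
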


\begin{proof}
As $f$ is the restriction on $[-\pi,\pi[$ of a continuous periodic function we can assume, without loss of generality, that $\theta_0 = 0$. Then, define for all $\theta\in ]0,\pi[$%the function $g:[-\pi,\pi[ \longrightarrow\R^+$ by
\begin{equation*}
	\rho (\theta)= \frac{1}{|\theta|}\int_{0}^{\theta} g(t)dt, \qquad \text{ where } g(t) = \min\left\{ f(-t) ,f(t), \min_{\abs{\tau}>t} f(\tau)\right\}, \text{ for all } t\in[-\pi,\pi[.\tag*{\qedhere}
\end{equation*}
\end{proof}

We now focus on the proof of the concentration inequality \eqref{eq.ee} which is divided in two steps: first we show the uniform convergence in probability of $F_{\mu^n_{p}}$ to $ F_{\mu_{p}}$ and then, we deduce the convergence of their argmins by using the lower bound given by the function $\rho$. %Recall the notations, $p^* = \argmin_{p\in\S^1} F_{\mu}(p)$ and $p^*_n = \argmin_{p\in\S^1} F_{\mu^n}(p)$. We fix an arbitrary $p\in\S^1$ and in normal coordinate centered at $p$ we have for all $\theta\in[-\pi,\pi[$
%$$
%F_{\mu_{p}}(\theta) = \int_{\R} d^2_{\S^1}(\theta,t) d\mu_{p}(t) \qquad \text{ and } \qquad F_{\mu^n_{p}}(\theta) = \int_{\R} d^2_{\S^1}(\theta,t) d\mu^n_{p}(t).
%$$
%Let $\theta_{p^*}= \theta_{p^*}^p = \argmin_{\theta\in\R} F_{\mu_{p}}(\theta)$ and $\theta_{p^*_n} =\theta_{p^*_n}^p = \argmin_{\theta\in\R} F_{\mu_{p_n}}(\theta)$.
%For all $\theta\in[-\pi,\pi[$, let $H(\theta) = \mu_{p}([-\pi,\theta[)$ be the cumulative distribution function (c.d.f) of $\mu_{p}$ and $H_n(\theta) =\mu^n([-\pi,\theta[)$  be  and the empirical c.d.f. Recall Proposition \ref{prop,grad} where we have shown that $\frac{d}{d\theta}F_{\mu_{p}}(\theta) = \theta - 2\pi H( -\pi+\theta) - m(\mu_p)$ and $\frac{d}{d\theta}F_{\mu^n_{p}}(\theta) = \theta-2\pi  H^n( -\pi+\theta)- m(\mu_p^n)$  if $\theta\in [0,\pi[$ and $\frac{d}{d\theta}F_{\mu_{p}}(\theta) =  \theta + 2\pi -2\pi H(\pi+\theta)- m(\mu_p)$ and  $\frac{d}{d\theta}F_{\mu^n_{p}}(\theta) = \theta +2\pi - 2\pi H^n(\pi+\theta)- m(\mu_p^n)$ if $\theta\in [-\pi,0[$. Then,
Equation \eqref{eq.grad} implies that 
\begin{align}
2\sup_{\theta\in[-\pi,\pi[} \babs{F_{\mu_{p}^n}(\theta) - F_{\mu_{p}}(\theta)} & \leq 2\pi \sup_{\theta\in[-\pi,\pi[} \babs{\frac{d}{d\theta}F_{\mu_{p}^n}(\theta) -\frac{d}{d\theta} F_{\mu_{p}}(\theta)} + 2\babs{F_{\mu_{p}^n}(0) - F_{\mu_{p}}(0)} \nonumber \\
&\leq  4\pi^2 \abs{ m(\mu_p) - m(\mu_{p}^n)}+ 2 \abs{m_2 (\mu_{p})- m_2(\mu_p^n)}  \label{eq.ub1}\\ & \qquad \qquad + 4\pi^2 \sup_{\theta\in[-\pi,\pi[} \abs{\mu_{p}([-\pi,\theta[) -\mu^n_{p}([-\pi,\theta[)} \label{eq.min1} 
%2 \abs{\int_\R t^2 d\mu_{p}(t)  - \frac{1}{n}\sum_{i=1}^n (\theta_{X_i}^{p})^2}
\end{align}
where $m_2(\nu) = \int_\R t^2 d\nu(t)$, for a measure $\nu$ on $\R$. The term \eqref{eq.min1} can be controlled in probability using the Dvoretzky-Kiefer-Wolfowitz inequality (see e.g \cite{MR1062069}), and we have for all $x>0$, 
$
\P\Big(4\pi^2 \sup_{\theta\in[-\pi,\pi[} \abs{\mu_{p}([-\pi,\theta[) -\mu^n_p([-\pi,\theta[)} \geq 4\pi^2\sqrt{\frac{x}{n}}  \Big) \leq 2e^{-x}.
$
For the terms of \eqref{eq.ub1} which involve the first and second moment of $\mu_p$ and  $\mu_p^n$, we use an  Hoeffding type inequality which gives for all $x>0$, 
$
\P\Big ( 4\pi^2\abs{m(\mu_{p})  -  m(\mu_p^n)} + 2\abs{m_2(\mu_{p})  -  m_2(\mu_p^n)} \geq s (4\pi^2 + 2s  )\sqrt{\frac{x}{n}} \Big ) \leq 2e^{-x},
$
where $s=\max\{ \abs{ x-y},\ x,y\in \text{support } \mu\}$ is the diameter of the support of $\mu$. Combining these two concentration inequalities % and \eqref{eq.min1}, it yields 
we have for all $x>0$,
\begin{equation}\label{eq.rho}
\P\left (2\sup_{\theta\in\R}\abs{F_{\mu_{p}}(\theta) -F_{\mu_{p}^n} (\theta) } \geq (4\pi^2+ 4\pi^2 s+2s)\sqrt{\frac{x}{n}}\right ) \leq 2e^{-x}.
\end{equation}

%As $\mu$ is an absolutely continuous measure with respect to the uniform measure on the circle $\S^1$, the functional $F_\mu$ is twice differentiable.
We now use a classical inequality in M-estimation,
$
\big|F_{\mu_{p}}(\theta_{p^*_n}) - F_{\mu_{p}}(\theta_{p^*})\big| %= F_{\mu_{p^*}}(\theta_{p^*_n})-F_{\mu_{p^*}^n}(\theta_{p^*_n})  +F_{\mu_{p^*}^n}(\theta_{p^*_n})  - F_{\mu_{p^*}}(0) \leq F_{\mu_{p^*}}(\theta_{p^*_n})-F_{\mu_{p^*}^n}(\theta_{p^*_n})  +F_{\mu_{p^*}^n}(0)  - F_{\mu_{p^*}}(0) 
\leq 2\sup_{\theta\in[-\pi,\pi[} \big|F_{\mu_{p}^n}(\theta)  - F_{\mu_{p}}(\theta)\big|.
$
By Lemma \ref{lem.rho}, there exists an increasing function $ \rho:\R^+ \longrightarrow\R^+$ such that 
$
F_{\mu_{p}}(\theta_{p^*_n}) - F_{\mu_{p}}(\theta_{p^*})\geq \rho(d_{\S^1}(\theta_{p^*_n} , \theta_{p^*} ) ).
$
%
%
%Recall that $G_{\mu_p}(\theta) = F_{\mu_p}(\theta) -F_{\mu_p}(\theta_{p^*}) $ and consider the following function 
%$$
%\tilde G_{\mu_p}(\theta) = \min\{ G_{\mu_p}(\theta_{p^*}-\theta)  ,G_{\mu_p}(\theta_{p^*} + \theta), \min_{\abs{t-\theta_{p^*}}>\theta} G_{\mu_p}(t)\}
%$$ 
%This is a even continuous function that is increasing for the positive $\theta$ %and decreasing on $[-\pi,0[$
%and that bounds $G_{\mu_p}$ from below. Moreover there is a $\varphi>0$  such that $\tilde G_{\mu_p}$ % = F_{\mu_p}(\theta)$ for all $\abs{\theta - \theta_{p^*}}\leq \varphi$, that  is $F_{\mu_p} $ 
%is %strictly decreasing on the negative side and 
%strictly increasing on $[0,\varphi[$. This is a consequence of the intermediate value theorem when we write $G_{\mu_{p}}(\theta) = \int_{\theta_{p^*}}^\theta \frac{d}{d\theta}G_{\mu_{p}}(t)dt$ and when we  use the fact that  $G_{\mu_{p}}(\theta)>0$, for all $\theta$. Then, there is a function $\tilde G^{-1}: [0,\varphi[ \longrightarrow \R^+$ such that we have $\tilde G^{-1}(\tilde G(\theta) ) = \abs{\theta - \theta_{p^*}}$ for all $\theta_{p^*}-\varphi\leq \theta \leq\theta_{p^*}+\varphi$.
%
%For all $\theta\in [\theta_{p^*} -\varphi, \theta_{p^*}+\varphi]^c$ 
Plugging this in equation \eqref{eq.rho} we have,
$$
\P\left (\rho(d_{\S^1}(\theta_{p^*_n} ,\theta_{p^*}) ) \geq(4\pi^2+ 4\pi^2 s+2s)\sqrt{\frac{x}{n}}\right ) \leq 2e^{-x},
$$
and the proof of Proposition \ref{prop.ee} is completed.% since $ \abs{\theta_{p^*_n} - \theta_{p^*}} = d_{\S^1}(p^*,p^*_n)$.
\end{proof}

The function $\rho$ that appears in the statement of Proposition \ref{prop.ee} can be explicitly computed if %an extra hypothesis on the density $f$. 
the  density $f\in P(\alpha,\varphi)$. % If  at  a critical point $p^*$ of $F_\mu$ there is an $\alpha\in]0,1]$ satisfying  $\frac{1}{2\pi} - f_{p^*}(t) \geq \frac{\alpha}{2\pi}$ for all $|t| \geq \frac{\pi}{1+\sqrt{\alpha}}$, then $p^*$  is the Fréchet mean of $\mu$. 
The parameter $\alpha\in]0,1]$ can be interpreted as a measure of the convexity of $F_{\mu_{p}}$ on the interval $[-\varphi, \varphi]$. For example, if $\alpha =1$ and $\varphi = \varphi_\alpha=\frac{\pi}{2}$, then $\mu$ has its support contained in $[-\frac{\pi}{2},\frac{\pi}{2}]$ and $F_{\mu_{p}}$ is quadratic on $[-\frac{\pi}{2},\frac{\pi}{2}]$. %  with a second derivative equals to $1$.

\begin{proposition} 
Let $\mu$ be a probability measure with density $f:\S^1\longrightarrow\R^+$ satisfying the hypothesis of Theorem \ref{theo.suf}. Note $p^*$ the Fr\'echet mean of $\mu$ and for all $x>0$  we have
$$
\P\left ( d_{\S^1}(p^*_n,p^*) \geq \sqrt{ B(\alpha,\varphi)  } \left(\frac{x}{n}\right)^{\tfrac{1}{4}} \right ) \leq 2e^{-x},
$$
where $B(\alpha,\varphi) =  C \max \left\{ \frac{\pi^2}{\gamma(\alpha,\varphi)}, \frac{2}{\alpha}\right\}  $ with $\gamma(\alpha,\varphi) = \frac{1}{2} (  \alpha(\pi-\varphi)^{2}-\varphi^{2})$ and $C = 4\pi(2\pi^2+\pi+1)$.
\end{proposition}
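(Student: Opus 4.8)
The plan is to combine the general M-estimation bound from Proposition~\ref{prop.ee} with an explicit lower bound on the function $\rho$ that holds under the hypotheses of Theorem~\ref{theo.suf}. Recall that Proposition~\ref{prop.ee} gives $\P(\rho(d_{\S^1}(p^*_n,p^*))\geq C(s)\sqrt{x/n})\leq 2e^{-x}$ with $C(s)\leq 4\pi(2\pi^2+\pi+1)=C$, so it suffices to show that under $P(\alpha,\varphi)$ (with the concentration constraints of Theorem~\ref{theo.suf}) one has a quadratic lower bound of the form $\rho(\theta)\geq \kappa(\alpha,\varphi)\,\theta^2$ for all $\theta\in]0,\pi[$, for a suitable $\kappa(\alpha,\varphi)>0$. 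Indeed, if $\rho(\theta)\geq\kappa\theta^2$, then the event $\{d_{\S^1}(p^*_n,p^*)\geq t\}$ is contained in $\{\rho(d_{\S^1}(p^*_n,p^*))\geq\kappa t^2\}$, and choosing $\kappa t^2 = C\sqrt{x/n}$ yields $t = \sqrt{C/\kappa}\,(x/n)^{1/4}$, which is exactly the claimed bound with $B(\alpha,\varphi)=C/\kappa$.

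The core of the argument is therefore to produce the constant $\kappa(\alpha,\varphi)$. Here I would use Lemma~\ref{lem.rho} together with the explicit description of $G_{\mu_{p^*}}=F_{\mu_{p^*}}-F_{\mu_{p^*}}(0)$. On the central interval $[-\pi+\varphi,\pi-\varphi]$ the function $G_{\mu_{p^*}}$ is convex with $G''_{\mu_{p^*}}\geq\alpha$ (this is exactly the computation in the proof of Proposition~\ref{prop:suf}), so $G_{\mu_{p^*}}(\theta)\geq\frac{\alpha}{2}\theta^2$ there. Outside this interval, the estimate \eqref{ineq.1} from the proof of Proposition~\ref{prop:suf} gives $G_{\mu_{p^*}}(\theta)\geq\frac12(\alpha(\pi-\varphi)^2-\varphi^2)=\gamma(\alpha,\varphi)$ for $\theta\in[\pi-\varphi,\pi[$ and symmetrically on $[-\pi,-\pi+\varphi[$; since $\theta^2\leq\pi^2$ always, this can be rewritten as $G_{\mu_{p^*}}(\theta)\geq\frac{\gamma(\alpha,\varphi)}{\pi^2}\theta^2$. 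Combining the two regimes, $G_{\mu_{p^*}}(\theta)\geq\min\{\frac{\alpha}{2},\frac{\gamma(\alpha,\varphi)}{\pi^2}\}\,\theta^2$ for all $\theta$, i.e. one can take $\kappa = \min\{\alpha/2,\gamma(\alpha,\varphi)/\pi^2\}$, so that $B(\alpha,\varphi)=C/\kappa = C\max\{2/\alpha,\pi^2/\gamma(\alpha,\varphi)\}$, matching the statement. One should check that $\gamma(\alpha,\varphi)>0$: this follows because the hypotheses force $\varphi\leq\delta\varphi_\alpha<\varphi_\alpha=\pi\sqrt\alpha/(1+\sqrt\alpha)$, and $\varphi<\varphi_\alpha$ is precisely the condition under which $\alpha(\pi-\varphi)^2-\varphi^2>0$ (as noted after \eqref{ineq.1}).

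There is one technical subtlety: Lemma~\ref{lem.rho} is stated for a continuous $f$ vanishing at a unique point, and it produces a specific $\rho$ via an averaging formula. I would verify that applying it to $G_{\mu_{p^*}}$ and then invoking the quadratic lower bound $G_{\mu_{p^*}}(\theta)\geq\kappa\theta^2$ is consistent — concretely, that the $\rho$ in Proposition~\ref{prop.ee} inherits the bound $\rho(\theta)\geq\kappa'\theta^2$ for a comparable constant, or alternatively just argue directly: the function $g$ in Lemma~\ref{lem.rho} satisfies $g(t)\geq\kappa t^2$ pointwise (since each of $G_{\mu_{p^*}}(t)$, $G_{\mu_{p^*}}(-t)$, and $\min_{|\tau|>t}G_{\mu_{p^*}}(\tau)\geq\kappa t^2$ dominates $\kappa t^2$), hence $\rho(\theta)=\frac{1}{|\theta|}\int_0^\theta g(t)\,dt\geq\frac{1}{|\theta|}\int_0^\theta\kappa t^2\,dt = \frac{\kappa}{3}\theta^2$. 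This costs a harmless factor of $3$; absorbing it (or tracking it) into $B(\alpha,\varphi)$ is routine, and the displayed constant $C=4\pi(2\pi^2+\pi+1)$ already carries the slack from $C(s)\leq C$.

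The main obstacle I anticipate is not conceptual but bookkeeping: making sure the quadratic lower bound is valid \emph{globally} on $[-\pi,\pi[$ and not merely on the convexity interval, and then feeding it through Lemma~\ref{lem.rho} without losing positivity or the correct dependence on $\alpha$ and $\varphi$. The transition region near $\pm(\pi-\varphi)$ must be handled so that the two piecewise bounds glue into a single $\kappa\theta^2$ estimate; this is exactly where \eqref{ineq.1} does the work, and where the hypothesis $\varphi<\varphi_\alpha$ is essential to keep $\gamma(\alpha,\varphi)>0$. Everything else — combining with the concentration inequality \eqref{eq.ee} and solving for $t$ — is a one-line substitution.
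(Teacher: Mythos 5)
Your overall route coincides with the paper's: invoke Proposition \ref{prop.ee} and exhibit an explicit quadratic minorant $\rho(t)=\kappa t^{2}$ with $\kappa=\min\{\alpha/2,\gamma/\pi^{2}\}$, obtained by splitting $[-\pi,\pi[$ into a central interval where $\frac{d^2}{d\theta^2}G_{\mu_{p^*}}\geq\alpha$ and an outer region handled by \eqref{ineq.1}. Your worry about Lemma \ref{lem.rho} is resolved exactly as you suggest in your opening paragraph: inequality \eqref{eq.ee} is derived for \emph{any} strictly increasing minorant of $F_{\mu}(\cdot)-F_{\mu}(p^*)$, so one simply takes $\rho(t)=\kappa t^{2}$ itself (which is what the paper does); the averaged formula of Lemma \ref{lem.rho} and the factor of $3$ never enter.

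The genuine gap is in which concentration parameter is available around $p^*$. You apply the convexity estimate on $[-\pi+\varphi,\pi-\varphi]$ and \eqref{ineq.1} with constant $\gamma(\alpha,\varphi)$ as if $f$ satisfied $P(p^*,\alpha,\varphi)$, i.e.\ as if the concentration held around the Fr\'echet mean with the original parameter $\varphi$. But the hypothesis of Theorem \ref{theo.suf} only gives $P(p,\alpha,\varphi)$ for \emph{some} $p$, and what transfers to $p^*$ (via Lemma \ref{lemme.properties} and the localization $d_{\S^1}(p,p^*)\leq(1-\delta)\varphi_\alpha$ from the proof of Theorem \ref{theo.suf}) is only $P(p^*,\alpha,\varphi_\alpha)$ with the larger parameter $\varphi_\alpha$. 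With that parameter the convexity $\frac{d^2}{d\theta^2}G_{\mu_{p^*}}\geq\alpha$ is guaranteed only on $[-\pi+\varphi_\alpha,\pi-\varphi_\alpha]$, and the outer bound degenerates: since $\pi-\varphi_\alpha=\pi/(1+\sqrt{\alpha})$ one has $\alpha(\pi-\varphi_\alpha)^{2}=\varphi_\alpha^{2}$, hence $\gamma(\alpha,\varphi_\alpha)=0$ and the minorant collapses; and because $\gamma(\alpha,\cdot)$ is decreasing, $\gamma(\alpha,\varphi_\alpha)$ cannot be bounded below by $\gamma(\alpha,\varphi)$. To be fair, the paper's own proof stumbles at exactly this point (it asserts $\gamma(\alpha,\varphi_\alpha)>0$ and then compares $\gamma(\alpha,\varphi_\alpha)$ to $\gamma(\alpha,\varphi)$ in the wrong direction), so you have faithfully reproduced the intended argument together with its weak spot; but as written, the bound $G_{\mu_{p^*}}(\theta)\geq\min\{\alpha/2,\gamma(\alpha,\varphi)/\pi^{2}\}\,\theta^{2}$ with the \emph{stated} constant is not established, and one would need either a sharper localization of $p^*$ or a restatement of $B$ in terms of the effective parameter $\varphi'=\varphi+d_{\S^1}(p,p^*)$.
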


\begin{proof}
This result is a direct consequence of Proposition \ref{prop.ee} and we only have to find a strictly increasing function $\rho:[0,\pi]\longrightarrow\R^+$ satisfying for all $\theta\in [-\pi, \pi[$, 
$
F_{\mu}(p) - F_{\mu} (p^*) \geq \rho(d_{\S^1} (p,p^* )).
$
%By Theorem \ref{theo.suf}, the Fréchet mean $p^*$ of $\mu$ exists and Lemma \ref{lem.rho} ensures that such a function $\rho$ exists. %there is a stricly increasing function $\rho$ which  satisfies,
%\begin{equation}\label{eq.minquad}
%F_{\mu_{p^*}}(\theta) - F_{\mu_{p^*}} (0) \geq \rho(\abs{\theta }).
%\end{equation}
%for all $\theta\in[-\pi,\pi[$. 
As $\mu_{p^*}$ admits a density $f_{p^*}$, 
%Fix an arbitrary $p\in\S^1$. As  
the Fréchet functional $F_{\mu_{p^*}} $ is twice differentiable and $f \in P(p^*,\alpha,\varphi_\alpha)$, see proof of Theorem \ref{theo.suf}. Thus, for all $\theta \in [-\pi+\varphi_\alpha,\varphi_\alpha-\pi]$, %$\theta \in [\theta_{p^*}^p-\varphi,\theta_{p^*}^p+\varphi]$, 
a second order Taylor expansion of $F_{\mu_{p^*}}$ at $0$  ensures that for some $\tilde \theta\in [-\pi+\varphi_\alpha,\pi-\varphi_\alpha]$,%$\tilde \theta\in [\theta_{p^*}^p-\varphi,\theta_{p^*}^p+\varphi]$% < \abs{\theta} \leq \varphi$,
$$
F_{\mu_{p^*}}(\theta) - F_{\mu_{p^*}} (0) = \frac{1}{2} \theta^2 \frac{d^2}{d\theta^2}F_{\mu_{p^*}} (\tilde\theta)\geq\frac{\alpha}{2}\theta^2 .
$$
%The last inequality is a direct consequence of property $P(p^*,\alpha,\varphi_\alpha)$ %Assumption  \ref{hyp.1}
% as we have 
%%$F''_{\mu_{p^*}}(\theta) = \begin{cases} \frac{1}{2\pi} - f(-\pi +\theta),  \text{ if $0\leq\theta<\pi$}\\  \frac{1}{2\pi} - f(\pi +\theta), \text{ if $-\pi\leq\theta<0$}\end{cases}.$ 
%$ \frac{d^2}{d\theta^2}F_{\mu_{p}}(\theta) =  1 - 2\pi f(-\pi +\theta)$, if $0\leq\theta<\pi$ and  $\frac{d^2}{d\theta^2}F_{\mu_{p}}(\theta) = 1 - 2\pi f(\pi +\theta)$, if $-\pi\leq\theta<0$. %Then Assumption \ref{hyp.1} ensures that for all $\theta \in [-\varphi,\varphi]$ we have
%$$
%\frac{\alpha}{4\pi}\theta^2 < F_{\mu_{p^*}}(\theta) - F_{\mu_{p^*}} (0) .
%$$
%Consider now $\theta \in ]-\pi,-\varphi[ \cup  ]\varphi,\pi[$. 
For all $\theta \in [-\pi,-\varphi_\alpha[ \cup  ]\varphi_\alpha,\pi[$ we have by inequality \eqref{ineq.1}, 
$$
F_{\mu_{p^*}}(\theta) - F_{\mu_{p^*}} (0) \geq \frac{1}{2} (  \alpha(\pi-\varphi_{\alpha})^{2}-\varphi_{\alpha}^{2}) =\gamma(\alpha,\varphi_{\alpha})>0.
$$ 
Then, let $\rho ( t)=  t^2 \min\{ \frac{\gamma(\alpha,\varphi_\alpha)}{\pi^2}, \frac{\alpha}{2}\} \geq t^2 \min\{ \frac{\gamma(\alpha,\varphi)}{\pi^2}, \frac{\alpha}{2}\} $ for any $\varphi \leq \varphi_\alpha$. %The proof is completed.
%$$
%F_{\mu_{p^*}}(\theta) - F_{\mu_{p^*}} (0) \geq (\theta - \theta_{p^*}^p)^2 \frac{\gamma(\varphi)}{\pi^2}  .
%$$
%
%Since $p^*$ is the unique global argmin of $F_\mu$, there is a $\gamma(\varphi)>0$ such that $F_{\mu_{p}}(\theta) \geq \gamma(\varphi)$ for all $\theta \in [-\pi,\theta_{p^*}^p-\varphi[ \cup  ]\theta_{p^*}^p +\varphi,\pi[$. It yields that for all $\theta \in [-\pi,\theta_{p^*}^p-\varphi[ \cup  ]\theta_{p^*}^p +\varphi,\pi[$ we have
%$$
%F_{\mu_{p^*}}(\theta) - F_{\mu_{p^*}} (0) \geq (\theta - \theta_{p^*}^p)^2 \frac{\gamma(\varphi)}{\pi^2}  .
%$$
%
%we have for some $\varphi \in ]\frac{\pi}{2}, \theta[$
%$$
%F_{\mu_{p^*}}(\theta) = F_{\mu_{p^*}}(\frac{\pi}{2}) + \theta F'_{\mu_{p^*}}(\frac{\pi}{2}) + \frac{\theta^2}{2} F''_{\mu_{p^*}}(\varphi) \geq  F_{\mu_{p^*}}(\frac{\pi}{2}) + \theta F'_{\mu_{p^*}}(\frac{\pi}{2}) -\frac{\theta^2}{2} \sup_{\varphi\in]\frac{\pi}{2},\pi[} | F''_{\mu^{p^*}}(\varphi)|
%$$
% Assumption \ref{hyp.1} implies that $F'(\frac{\pi}{2}) \geq \frac{\alpha}{4}$ and $|F''(\varphi)| \leq \frac{1}{2\pi}\beta$. Then,
%$$
%F_{\mu^{p^*}}(\theta) \geq \frac{\alpha}{4\pi} \frac{\pi^2}{4} + \frac{\pi}{2} \frac{\alpha}{4} - \frac{\pi^2}{8} \frac{1}{2\pi}\beta = \alpha\frac{\pi}{16} + \alpha\frac{\pi}{8} - \frac{\pi}{16}\beta = \frac{\pi}{16}(3 \alpha - \beta).
%$$
%$$
%\min_{\theta \in[-\pi,\pi[ \setminus [-\varphi,\varphi[ } \abs{ F_{\mu_{p^*}}(\theta) - F_{\mu_{p^*}}(0) }  \geq \frac{\alpha}{4\pi}\varphi^2 - \snorm{F'}_{\infty} (\pi -\varphi ) 
%$$
\end{proof}
 \subsection{Computation}

Computation of the Fréchet mean of a general probability measure may not be an easy task as it is a \emph{global} optimization problem. In practice the Fréchet functional is not a convex function and a gradient descent algorithm will only give a \emph{local} minimum which depends on the initialization point. %Typical examples of empirical Fréchet functional are given Figure \ref{fig.2}.

 \begin{figure}[h!]
\begin{center}
	\subfigure[]{\includegraphics[width=5cm]{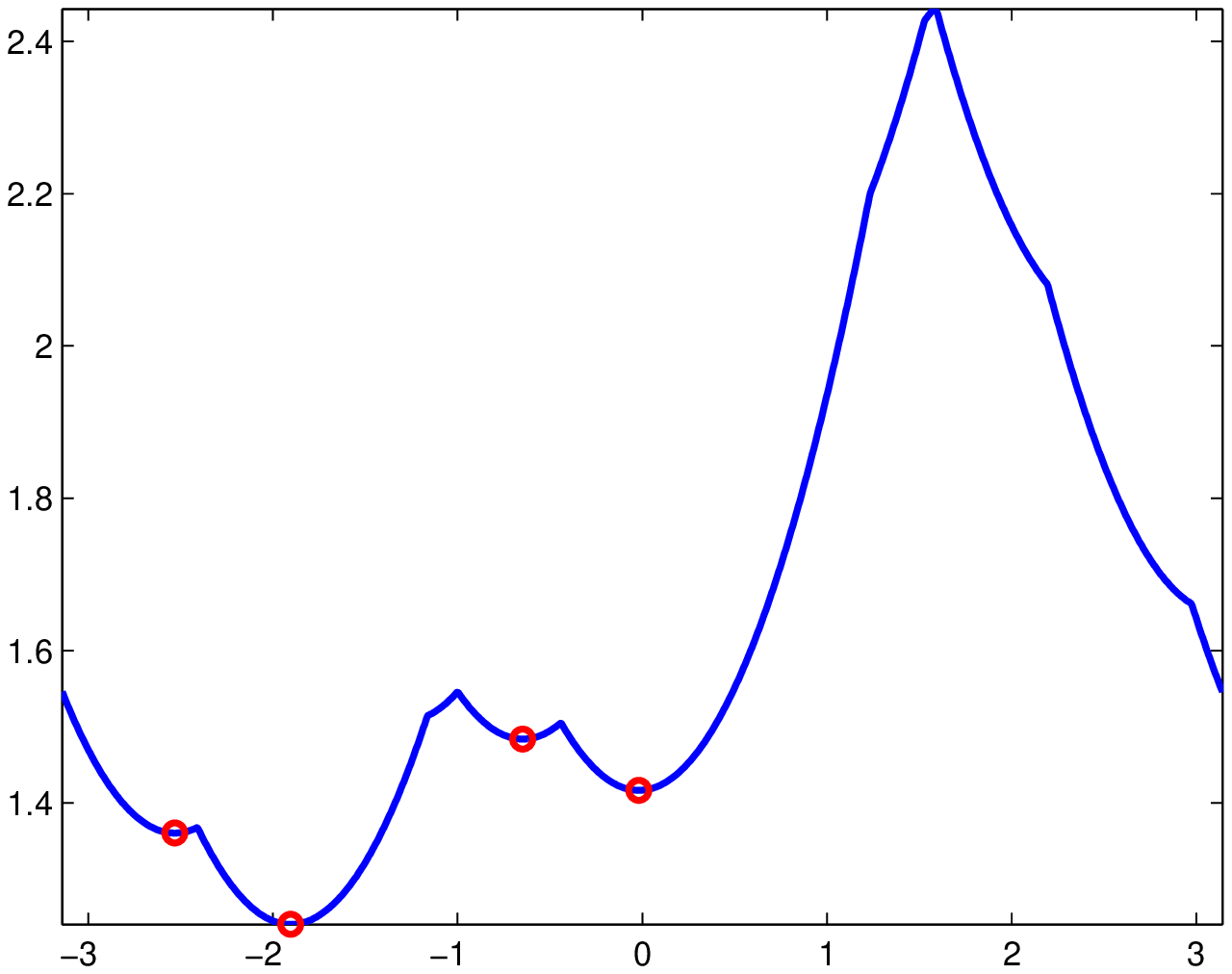}\label{fig.ex1}} 
	\subfigure[]{\includegraphics[width=5cm]{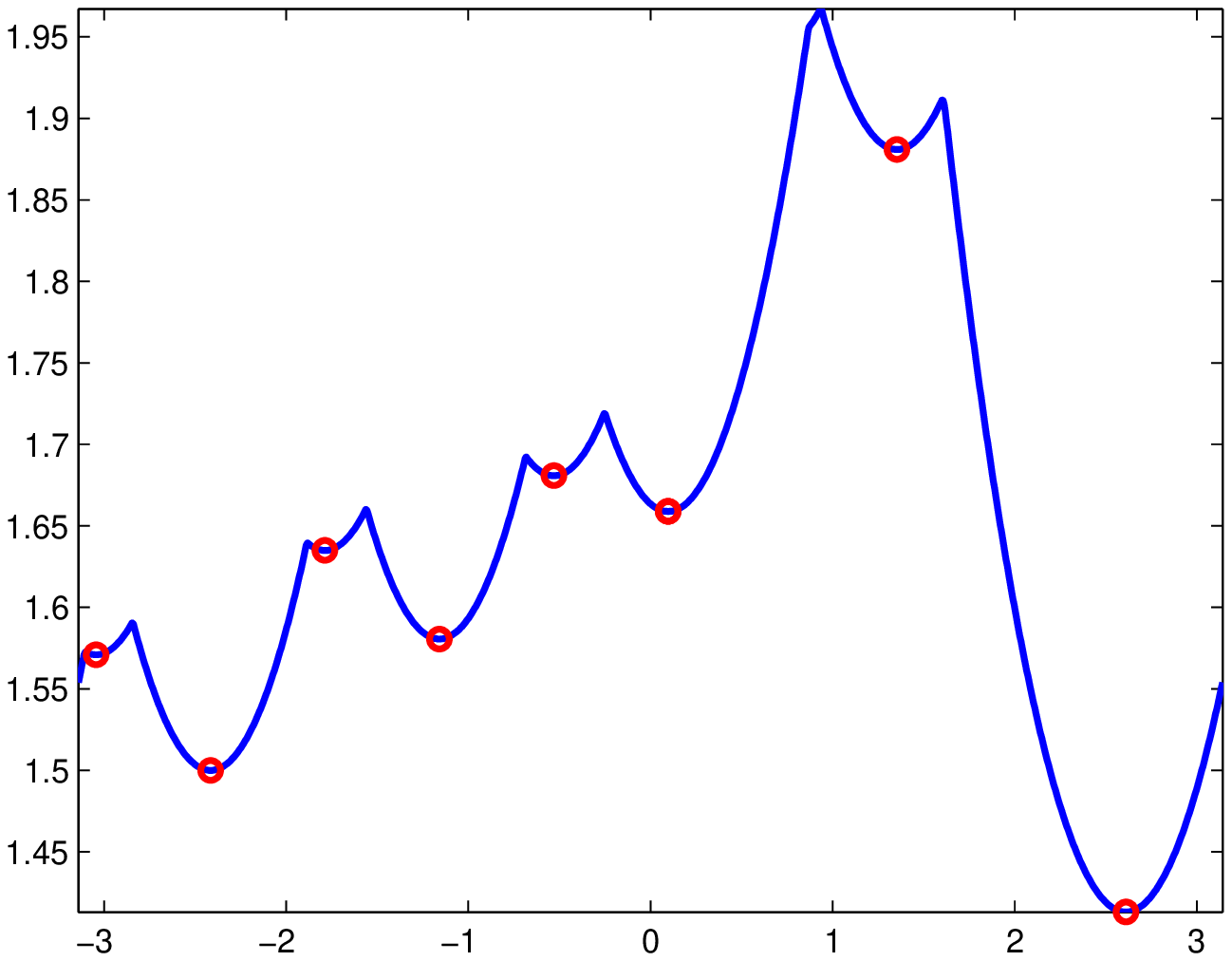}\label{fig.ex2}}
	\subfigure[]{\includegraphics[width=5cm]{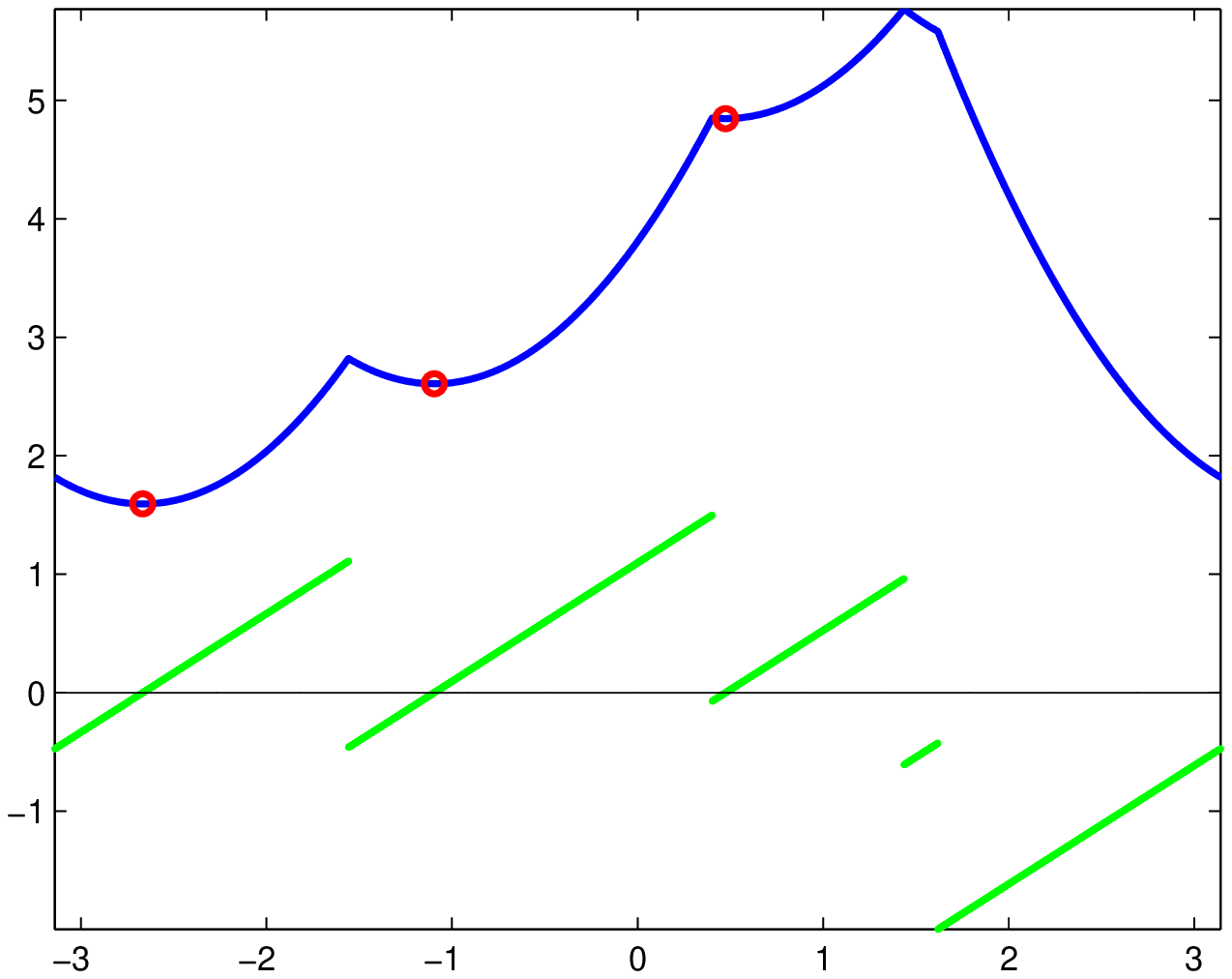}\label{fig.ex3}}
	\caption{\subref{fig.ex1} and \subref{fig.ex2} Plots of $F_{\mu^n}$ where $n=10$ and $\mu^n= \sum_{i=1}^n \delta_{X_i}$ where the $X_i$ i.i.d of uniform law $\lambda$. The red points are the local minima computed with the described algorithm. \subref{fig.ex3} In blue: plot of $F_{\mu^n}$ where $n=4$. In green: the derivative of $F_{\mu^n}$. The critical points are given by the intersection between the green curve and the $x$-axis in black.}\label{fig.2}
\end{center}
\end{figure}

%In the following we will use the results of section \ref{part.locmin} to derive an algorithm to compute the empirical Fréchet mean. 
The Fr\'echet functional of an empirical measure is a continuous piecewise quadratic function as it can be written $F_{\mu^n_p}(\theta) = \frac{1}{2n} \sum_{i=1}^n \big(\theta - \theta^p_{X_i} + 2\pi (\1_{[\pi+\theta,\pi[}(\theta_{X_i}^p)\1_{[-\pi,0[}(\theta)-\1_{[-\pi,\theta-\pi[}(\theta_{X_i}^p)\1_{[0,\pi[}(\theta)) \big)^2  $. This formula together with Corollary \ref{prop.min} implies that the regular critical points (i.e point with no mass at their cut locus) of $F_\mu$ are precisely the local minima of $F_\mu$. Moreover, the cumulative distribution function of $\mu^n$ is, here, piecewise constant with jumps of size $\frac{1}{n}$ and we have
$$
\mu^{n}_p ([-\pi, t[) = \frac{1}{n}\sum_{i=1}^n \1_{[-\pi,t[}(\theta_{X_i}^p) = \frac{1}{n}\Card \{\theta_{X_i}^p < t \}. 
$$
Thence, the derivative of the empirical Fr\'echet functional is piecewise linear and to find the critical points amounts to solve $n$ affine relations given by equation \eqref{eq.grad}, see also Figure \ref{fig.ex3} for an illustration. Note, that in practice, there are less than $n$ solutions, see  Figure \ref{fig.ex1} and \ref{fig.ex2}. %see e.g. Figure \ref{fig.2} where there is only 4 and 8 critical points with $n=10$. 
%Recall that these relations involve the $\mu$-measure and the uniform measure of the neighborhood of the cut locus.
%\begin{figure}[h!]
%\begin{center}
%\includegraphics[width=5cm]{./figures/algo}
%\caption{ In blue: plot of $F_{\mu^n}$ where $n=4$. In green: the derivative of $F_{\mu^n}$. The critical points are given by the intersection between the green curve and the $x$-axis in black. }\label{fig.2bis}
%\end{center}
%\end{figure}

\bigskip

The following algorithm takes as input the values $\{X_i\}_{i=1}^n$ and returns the Fréchet mean of $\mu^n = \frac{1}{n}\sum_{i=1}^n \delta_{X_i}$. %See also Figure \ref{fig.2} for an illustration.\\%By Proposition \ref{prop.atom} we need to comp

\bigskip

\noindent \textbf{Initialization Step:} Choose an arbitrarily point $p\in\S^1$. \\ %, e.g. $p = N = (0,1)$.
 Compute the coordinates $\{\theta_{X_i}^p\}_{i=1}^n$ and reorder them is increasing order. We denote $\tau^-_0=-\pi\leq\tau^-_1 \leq \tau^-_2 \leq \ldots\leq \tau^-_{n_1} <0 =\tau^-_{n_1 +1} $ the $n_1$ negative sorted terms and $\tau^+_0=\pi> \tau^+_1 \geq \ldots \geq \tau^+_{n_2}\geq 0=\tau^+_{n_2 +1}$ the  $n_2 = n - n_1$ positive sorted terms.\\
Compute the mean $m(\mu_p^n) = \frac{1}{n}(\tau^-_1 + \ldots + \tau^-_{n_1} +\tau^+_1 + \ldots +  \tau^+_{n_2})$ and initialize $\theta_{p^*}^p$ to $0$, says.\\
%\bold{Step 1bis: }

\noindent \texttt{\# The first step compares all the local minima in $[0,\pi[$ } 

\noindent\textbf{Step 1:} For i from $0$ to $n_1$ do \\\texttt{\# $\theta^p_{p^*,new}$  is the candidate to be a critical point between $\tau^-_i $ and $\tau^-_{i+1} $} \\
$\qquad$  Let $\theta^p_{p^*,new}=2\pi\frac{i}{n} + m(\mu_p^n)$\\ \texttt{\# verify if $\theta^p_{p^*,new}$  is a critical point and then test its value. If better, keep it.} \\
$\qquad$    if  $\tau^-_i +\pi\leq \theta^p_{p^*,new} \leq \tau^-_{i+1}+\pi$ and $F_{\mu^n_p}( \theta^p_{p^*,new}) \leq F_{\mu^n_p}( \theta_{p^*}^{p} )  $ then $\theta^p_{p^*} := \theta^p_{p^*,new}$ end if\\ %if $\frac{1}{2\pi}(\tau^-_i - m(\mu_N)) \leq \frac{i}{n} \leq \frac{1}{2\pi}(\tau^-_{i+1} - m(\mu_N)) $  
%$p_{new} = 2\pi\frac{i}{n} + m(\mu_N) )$ and   if $F(p_{new} ) \leq F(p ) $ then 
end for.

\noindent \texttt{\# The Step 2 is the same as Step 1 but for local minima in $[-\pi,0[$ } 

\noindent\textbf{Step 2:}   For i = $0$ to $n_2$ do \\
$\qquad$  Let $\theta^p_{p^*,new}= - 2\pi\frac{i}{n} + m(\mu_p^n)$\\ 
$\qquad$    if  $\tau^+_{i+1} - \pi \leq \theta^p_{p^*,new} \leq \tau^+_{i}- \pi $ and $F_{\mu^n_p}( \theta^p_{p^*,new}) \leq F_{\mu^n_p}( \theta_{p^*}^{p} )  $ then $\theta^p_{p^*} := \theta^p_{p^*,new}$ end if\\
end for.
%
%
%$\qquad$    if $\frac{1}{2\pi}(\tau^+_i - m(\mu_N)) \leq \frac{i}{n} \leq \frac{1}{2\pi}(\tau^+_{i+1} - m(\mu_N)) $ \\
%$\qquad\quad $  then $p_{new} = 2\pi\frac{i}{n} + m(\mu_N) )$ and   if $F(p_{new} ) \leq F(p ) $ then $p := p_{new}$ end if\\
% end if\\
% end for.
% 

\noindent \texttt{\# The value of  $\theta^p_{p^*}$ is the best argmin } 

\noindent \noindent\textbf{Output:} Return $p^* = e_p(\theta^p_{p^*})$. 

\bigskip

This algorithm can be extended to more general measures that the empirical one. The approach will be the same: find the critical points of the Fréchet functional with formula  \ref{eq.grad} and compare the values of the local minima. Unfortunately, there may be some computational issues as  general cumulative distribution function will be not piecewise constant anymore.

\section{Conclusion}

It is not straightforward to extend criterion such as the one given in Theorem \ref{th:crit} to more general spaces, e.g. for the $n$ dimensional sphere $\S^n$. Recall that the circle $\S^1$ is a flat space in the sense that it is locally isometric to the Euclidean space $\R$. Then, the only phenomenon that induces uniqueness issues of the Fréchet mean is the presence of a cut locus.  The criterion presented in this note relies on an explicit formula for the gradient of the Fréchet mean. Curvature has an extra effect on the metric and makes difficult to derive exact computation on the Fréchet  functional and its gradient. Moreover, it is not clear if the role played by the uniform measure as a benchmark in the well definiteness of the Fréchet mean in $\S^1$ can be extended to $n$-spheres or non flat manifolds. %The extension of the criterion to general flat manifold is undoubtedly a more tractable program.

\bigskip

\noindent{\bf Acknowledgements.}   The author would like to thank Dominique Bakry for help and encouragement during the writing of this paper and Jérémie Bigot for his careful reading of the manuscript. The author acknowledges the support of the French Agence Nationale de la
Recherche (ANR) under reference ANR-JCJC-SIMI1 DEMOS.

\bibliographystyle{plain}
\bibliography{biblio}
\end{document}